\newtheorem{theorem}{Theorem}
\theoremstyle{plain}
\newtheorem{acknowledgement}{Acknowledgement}
\newtheorem{definition}{Definition}
\newtheorem{proposition}{Proposition}
\newtheorem{remark}{Remark}
\numberwithin{equation}{section}
\begin{document}
\title[Homogenization of a nonlocal diffusion equation]{Homogenization of a
Wilson-Cowan model for neural fields}
\author{Nils Svanstedt}
\address{N. Svanstedt, Department of Mathematical Sciences, G\"{o}teborg
University, SE-412 96 G\"{o}teborg, Sweden}
\email{nilss@math.chalmers.se}
\author{Jean Louis Woukeng}
\address{Jean Louis Woukeng, Department of Mathematics and Computer Science,
University of Dschang, P.O. Box 67, Dschang, Cameroon}
\email{jwoukeng@yahoo.fr}
\date{}
\subjclass[2000]{35B40, 45G10, 46J10}
\keywords{Neural field models, Wilson-Cowan equations, algebra with mean
value, homogenization}

\begin{abstract}
Homogenization of Wilson-Cowan type of nonlocal neural field models is
investigated. Motivated by the presence of a convolution terms in this type
of models, we first prove some general convergence results related to
convolution sequences. We then apply these results to the homogenization
problem of the Wilson-Cowan type model in a general deterministic setting.
Key ingredients in this study are the notion of algebras with mean value and
the related concept of sigma-convergence.
\end{abstract}

\maketitle

\section{Introduction}

Experiments/observations through different EEG, fMRI, MEG and optical
imaging techniques reveal electrical activity patterns spanning over several
centimeter of brain tissue i.e. of length scales much larger than the
spatial extent of one single neuron. Moreover, these structures have a
lifetime which is much larger than the lifetime of typical firing time for a
neuron. Now, as the cortex obviously is a heterogeneous medium possessing
many different spatial and temporal scales it is a need to have rigorous
ways of determining how the spatio-temporal microstructure is stored in mean
field models for the brain activity. One way of doing this is by means of 
\emph{homogenization theory} based on multi-scale convergence techniques.
The problem of homogenization or \emph{scaling} is to determine from data or
local characteristics, the effective models representing the macroscopic
behavior of mesoscopically inhomogeneous media.

In the quest of studying and understanding the neurodynamics, several
heuristic models have been developed. The classical \emph{leaky integrator
unit} model \cite{HKP91, SLM74, WC72} described by ordinary differential
equations has given rise to the most studied \emph{Wilson-Cowan type of
models} \cite{WC73} (see also Amari \cite{A}) which, in the one dimensional
space, reads as

\begin{equation}
\frac{\partial }{\partial t}u(x,t)=-u(x,t)+\int_{-\infty }^{\infty
}J(x^{\prime },x)f(u(x^{\prime },t))dx^{\prime }.  \label{1.1}
\end{equation}%
Here $u(x,t)$ denotes the neural field which measures the local activity of
neurons at position $x\in {\mathbb{R}}$. The integral represents the
synaptic input where the function $J(x,x^{\prime })$ measures the strength
of connections between neurons at positions $x$ and $x^{\prime }$. We refer
to this function as the connectivity function. The function $f$ is the
firing rate function. Equation (\ref{1.1}) models the neural field in a
homogeneous medium. For more details concerning this model we refer to e.g.
Wilson and Cowan \cite{WC73} and Amari \cite{A} and the more recent work by
Coombes \cite{Coombes2005} and the references therein.

A drawback with the model (\ref{1.1}) above is that does not take into
account that the brain is very heterogeneous with a structure that exhibits
multiple spatial scales ranging from micro- to decimeter. In addition the
dynamical activities are taking place on multiple time scales. In order to
capture several such properties we are led to allow the connectivity to
depend on both space and time and also to depend on multiple spatial and
temporal scales. In the present work we will consider time independent
connectivity kernels.

An obvious way to impose heterogeneity in the well-accepted Wilson-Cowan or
Amari model (\ref{1.1}) is to replace the connectivity function $J$ by a
function $J_{\varepsilon }$, where the parameter $\varepsilon >0$ measures
the heterogeneity of the brain tissue.

The homogenization procedure has been used in many applied science fields to
upscale various mathematical models. As far as the neural field models are
concerned, there are very few works dealing with homogenization techniques.
See e.g., \cite{KFB} or \cite{CLSSW2011}. In \cite{KFB} a homogenization
based approach for studying nonlocal heterogeneous neural field models of
Wilson-Cowan type based on multi-scale expansion techniques is initiated. A
heterogeneous neural field model is also advocated in \cite{CLSSW2011} where
the point of departure is the parametrized Wilson-Cowan model 
\begin{equation}
\frac{\partial }{\partial t}u_{\varepsilon }(x,t)=-u_{\varepsilon
}(x,t)+\int_{\mathbb{R}^{N}}J_{\varepsilon }(x-x^{\prime })f(u_{\varepsilon
}(x^{\prime },t))dx^{\prime },\quad x\in \mathbb{R}^{N},\quad t>0
\label{1.2}
\end{equation}%
where $u_{\varepsilon }$ denotes the electrical activity level field, $f$
the firing rate function and $J_{\varepsilon }=J_{\varepsilon
}(x)=J(x,x/\varepsilon )$ the connectivity kernel which by assumption is
periodic in the second argument $y=x/\varepsilon $. In the present work,
under a general deterministic assumption on the kernel (including the
periodicity assumption and the almost periodicity assumption) we prove
rigorously (see Theorem \ref{t4.1}) that, as $\varepsilon \rightarrow 0$,
the solution $u_{\varepsilon }$ to the Wilson-Cowan model (\ref{1.2})
converges to the solution $u_{0}$ of a homogenized Wilson-Cowan equation 
\begin{equation}
\frac{\partial }{\partial t}u_{0}(x,t,y)=-u_{0}(x,t,y)+(J\ast \ast
f(u_{0}))(x,t,y).  \label{1.3}
\end{equation}%
In the special periodic case, Equation (\ref{1.3}) reads as 
\begin{equation*}
\frac{\partial }{\partial t}u_{0}(x,t,y)=-u_{0}(x,t,y)+\int_{\mathbb{R}%
^{N}}\int_{Y}J(x-x^{\prime },y-y^{\prime })f(u_{0}(x^{\prime },t,y^{\prime
}))dy^{\prime }dx^{\prime }.
\end{equation*}%
Due to the nonlinearity in our model equation, we can not use the Laplace
transform in the homogenization process. Also our method works even in the
non-Hilbertian framework. Indeed, considering two sequences $(u_{\varepsilon
})_{\varepsilon }$ and $(v_{\varepsilon })_{\varepsilon }$ in $L^{1}(\mathbb{%
R}^{N})$ and $L^{p}(Q)$ respectively satisfying $u_{\varepsilon }\rightarrow
u_{0}$ in $L^{1}(\mathbb{R}^{N})$-strong $\Sigma $ and $v_{\varepsilon
}\rightarrow v_{0}$ in $L^{p}(Q)$-weak $\Sigma $ as $\varepsilon \rightarrow
0$ (where $Q$ is an open subset of $\mathbb{R}^{N}$) we get that $%
u_{\varepsilon }\ast v_{\varepsilon }\rightarrow u_{0}\ast \ast v_{0}$ in $%
L^{p}(Q)$-weak $\Sigma $ as $\varepsilon \rightarrow 0$, where $u_{0}\ast
\ast v_{0}$ is a double convolution with respect to both macroscopic and
microscopic\ variables; see Theorem \ref{t2.2}. The above result was first
proved by Visintin \cite{Visintin2}\ in the periodic setting by using the
two-scale transform or unfolding method. Theorem \ref{t2.2} allows us to
pass to the limit in the convolution terms without using neither the Fourier
transform, nor the Laplace transform, and hence without restricting
ourselves to the Hilbertian setting as it is the case in \cite{Wellander}.
Taking into account the fact that the brain is not necessarily a periodic
medium (even if it can exhibit some kinds of periodicity), we can therefore
emphasize that our work is a true advance in the neural field community.

The paper is organized as follows. In Section 2 we recall some background
material regarding the concept of sigma-convergence. We also prove two
important results which are of independent interest, a general two-scale
convergence result for translates (Proposition \ref{p2.4}) and a general
two-scale convergence result for convolution products (Theorem \ref{t2.2}).
The method used in deriving these results is based on the notion of algebras
with mean value and the concept of sigma-convergence. In Section 3 we prove
existence of solution to the Wilson-Cowan model and derive the a priori
estimate needed for the main homogenization result which is stated and
proved in Section 4. Finally, Section 5 deals with conclusions and outlook.

\section{$\Sigma $-convergence and convolution}

\subsection{Some properties of algebras with mean value}

Let $A$ be an algebra with mean value on $\mathbb{R}^{N}$ (see \cite{20, 38}%
), that is, $A$ is a closed subalgebra of the $\mathcal{C}$*-algebra of
bounded uniformly continuous complex functions $BUC(\mathbb{R}^{N})$ which
contains the constants, is closed under complex conjugation ($\overline{u}%
\in A$ whenever $u\in A$), is translation invariant ($u(\cdot +a)\in A$ for
any $u\in A$ and each $a\in \mathbb{R}^{N}$) and such that each element
possesses a mean value in the following sense:

\begin{itemize}
\item[(\textit{MV})] For each $u\in A$, the sequence $(u^{\varepsilon
})_{\varepsilon >0}$ (where $u^{\varepsilon }(x)=u(x/\varepsilon )$, $x\in 
\mathbb{R}^{N}$) weakly $\ast $-converges in $L^{\infty }(\mathbb{R}^{N})$
to some constant function $M(u)\in \mathbb{C}$ (the complex field).
\end{itemize}

It is known that $A$ (endowed with the sup norm topology) is a commutative $%
\mathcal{C}$*-algebra with identity. We denote by $\Delta (A)$ the spectrum
of $A$ and by $\mathcal{G}$ the Gelfand transformation on $A$. We recall
that $\Delta (A)$ (a subset of the topological dual $A^{\prime }$ of $A$) is
the set of all nonzero multiplicative linear functionals on $A$, and $%
\mathcal{G}$ is the mapping of $A$ into $\mathcal{C}(\Delta (A))$ such that $%
\mathcal{G}(u)(s)=\left\langle s,u\right\rangle $ ($s\in \Delta (A)$), where 
$\left\langle ,\right\rangle $ denotes the duality pairing between $%
A^{\prime }$ and $A$. We endow $\Delta (A)$ with the relative weak$\ast $
topology on $A^{\prime }$. Then using the well-known theorem of Stone (see
e.g., either \cite{21} or more precisely \cite[Theorem IV.6.18, p. 274]{16})
one may easily show that the spectrum $\Delta (A)$ is a compact topological
space, and the Gelfand transformation $\mathcal{G}$ is an isometric
isomorphism identifying $A$ with $\mathcal{C}(\Delta (A))$ (the continuous
functions on $\Delta (A)$) as $\mathcal{C}$*-algebras. Next, since each
element of $A$ possesses a mean value, this induces a mapping $u\mapsto M(u)$
(denoted by $M$ and called the mean value) which is a nonnegative continuous
linear functional on $A$ with $M(1)=1$, and so provides us with a linear
nonnegative functional $\psi \mapsto M_{1}(\psi )=M(\mathcal{G}^{-1}(\psi ))$
defined on $\mathcal{C}(\Delta (A))=\mathcal{G}(A)$, which is clearly
bounded. Therefore, by the Riesz-Markov theorem, $M_{1}(\psi )$ is
representable by integration with respect to some Radon measure $\beta $ (of
total mass $1$) in $\Delta (A)$, called the $M$\textit{-measure} for $A$ 
\cite{26}. It is evident that we have 
\begin{equation}
M(u)=\int_{\Delta (A)}\mathcal{G}(u)d\beta \text{\ for }u\in A\text{.}
\label{2.1}
\end{equation}

The following result is worth recalling. Its proof can be found in \cite%
{DPDE}, and we recall it here for further purposes.

\begin{theorem}
\label{t2.1}Let $A$ be an algebra with mean value on $\mathbb{R}^{N}$. The
translations $T(y):\mathbb{R}^{N}\rightarrow \mathbb{R}^{N}$, $T(y)x=x+y$,
extend to a group of homeomorphisms $T(y):\Delta (A)\rightarrow \Delta (A)$, 
$y\in \mathbb{R}^{N}$, which forms a continuous $N$-dimensional dynamical
system on $\Delta (A)$ whose invariant probability measure is precisely the $%
M$-measure $\beta $ for $A$.
\end{theorem}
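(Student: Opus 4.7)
The plan is to extend the translations to $\Delta(A)$ by duality, check that the resulting family is a continuous group action by homeomorphisms, and then verify that the $M$-measure $\beta$ is invariant by pulling everything back to $\mathbb{R}^{N}$ and invoking the translation invariance of the mean value.

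First I would define, for each $y\in\mathbb{R}^{N}$ and $s\in\Delta(A)$, the functional $T(y)s\in A^{\prime}$ by $\langle T(y)s,u\rangle=\langle s,u(\cdot+y)\rangle$ for $u\in A$. This makes sense because $A$ is translation invariant. Since $u\mapsto u(\cdot+y)$ is a unital algebra automorphism of $A$, the composition $T(y)s$ remains a nonzero multiplicative linear functional, hence lies in $\Delta(A)$. A direct computation gives $T(0)=\mathrm{id}$ and $T(y+z)=T(y)\circ T(z)$, so each $T(y)$ is a bijection with inverse $T(-y)$. For fixed $y$ and $u\in A$, the map $s\mapsto\langle T(y)s,u\rangle=\langle s,u(\cdot+y)\rangle$ is weak-$\ast$ continuous because $u(\cdot+y)\in A$; since such functionals generate the topology of $\Delta(A)$, $T(y)$ is continuous, and applying the same reasoning to $T(-y)$ yields the homeomorphism property.

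For the continuous $N$-dimensional dynamical system structure, the key point is that $A\subset BUC(\mathbb{R}^{N})$, so each $u\in A$ is uniformly continuous, and therefore $y\mapsto u(\cdot+y)$ is continuous from $\mathbb{R}^{N}$ into $A$ with the sup-norm topology. Combined with the fact that $\|s\|_{A^{\prime}}=1$ for all $s\in\Delta(A)$, the elementary estimate
\[
|\langle T(y^{\prime})s^{\prime},u\rangle-\langle T(y)s,u\rangle|\leq\|u(\cdot+y^{\prime})-u(\cdot+y)\|_{\infty}+|\langle s^{\prime}-s,u(\cdot+y)\rangle|
\]
gives joint continuity of $(y,s)\mapsto T(y)s$. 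I expect this pinning down of the joint continuity to be the main technical step, but it reduces cleanly to the $BUC$ hypothesis.

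Finally, to verify that $\beta$ is invariant, I would first prove translation invariance of the mean value itself: $M(u(\cdot+y))=M(u)$ for all $u\in A$ and $y\in\mathbb{R}^{N}$. Writing $(u(\cdot+y))^{\varepsilon}(x)=u(x/\varepsilon+y)=u^{\varepsilon}(x+\varepsilon y)$, the uniform continuity of $u$ implies that this function differs from $u^{\varepsilon}$ by a quantity tending uniformly to zero as $\varepsilon\to 0$, so the two sequences have the same weak-$\ast$ limit in $L^{\infty}(\mathbb{R}^{N})$. Then, using the representation (2.1),
\[
\int_{\Delta(A)}\mathcal{G}(u)(T(y)s)\,d\beta(s)=\int_{\Delta(A)}\mathcal{G}(u(\cdot+y))(s)\,d\beta(s)=M(u(\cdot+y))=M(u)=\int_{\Delta(A)}\mathcal{G}(u)\,d\beta.
\]
Since $\mathcal{G}(A)=\mathcal{C}(\Delta(A))$, this equality of integrals for every continuous function on $\Delta(A)$ yields $T(y)_{\ast}\beta=\beta$ by the Riesz representation theorem, completing the proof.
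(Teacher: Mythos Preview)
Your construction and the paper's are essentially the same: the paper obtains the homeomorphism $T(y)$ on $\Delta(A)$ by passing the translation automorphism of $A$ through the Gelfand transform and invoking the Banach--Stone theorem, while you write down directly the transpose map $\langle T(y)s,u\rangle=\langle s,u(\cdot+y)\rangle$, which is exactly what Banach--Stone produces in this commutative setting. Your treatment of the joint continuity via the estimate and the $BUC$ hypothesis is also what the paper does, phrased with sequences and nets.

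There is, however, a genuine slip in your verification of $M(u(\cdot+y))=M(u)$. You assert that $(u(\cdot+y))^{\varepsilon}=u^{\varepsilon}(\cdot+\varepsilon y)$ differs from $u^{\varepsilon}$ by a quantity tending uniformly to zero, citing uniform continuity of $u$. But
\[
\bigl\|u^{\varepsilon}(\cdot+\varepsilon y)-u^{\varepsilon}\bigr\|_{\infty}
=\sup_{x}\bigl|u(x/\varepsilon+y)-u(x/\varepsilon)\bigr|
=\sup_{z}\bigl|u(z+y)-u(z)\bigr|,
\]
which is independent of $\varepsilon$ and in general nonzero; uniform continuity of $u$ controls $|u(z+h)-u(z)|$ for small $h$, not for a fixed translation $y$. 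The conclusion you want is nonetheless correct: for any $\varphi\in L^{1}(\mathbb{R}^{N})$,
\[
\int u^{\varepsilon}(x+\varepsilon y)\varphi(x)\,dx=\int u^{\varepsilon}(x)\varphi(x-\varepsilon y)\,dx,
\]
and since $\varphi(\cdot-\varepsilon y)\to\varphi$ in $L^{1}$ while $\|u^{\varepsilon}\|_{\infty}\leq\|u\|_{\infty}$, both sequences have the same weak-$\ast$ limit $M(u)$. The paper itself does not re-derive this translation invariance of $M$; it simply cites it as a known property of the mean value and uses it together with the integral representation~(\ref{2.1}), exactly as in your final displayed chain of equalities.
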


\begin{proof}
As $A$ is translation invariant, each translation $T(y)$ induces an
isometric isomorphism still denoted by $T(y)$, from $A$ onto $A$, defined by 
$T(y)u=u(\cdot +y)$ for $u\in A$. Define $\widetilde{T}(y):\mathcal{C}%
(\Delta (A))\rightarrow \mathcal{C}(\Delta (A))$ by 
\begin{equation*}
\widetilde{T}(y)\mathcal{G}(u)=\mathcal{G}(T(y)u)\;\;(u\in A)
\end{equation*}%
where $\mathcal{G}$ denotes the Gelfand transformation on $A$. Then $%
\widetilde{T}(y)$ is an isometric isomorphism of $\mathcal{C}(\Delta (A))$
onto itself; this is easily seen by the fact that $\mathcal{G}$ is an
isometric isomorphism of $A$ onto $\mathcal{C}(\Delta (A))$. Therefore, by
the classical Banach-Stone theorem there exists a unique homeomorphism $%
\overline{T}(y)$ of $\Delta (A)$ onto itself. The family thus constructed is
in fact a continuous $N$-dimensional dynamical system. Indeed the group
property easily comes from the equality $\mathcal{G}(T(y)u)(s)=\mathcal{G}%
(u)(\overline{T}(y)s)$ ($y\in \mathbb{R}^{N}$, $s\in \Delta (A)$, $u\in A$).
As far as the continuity property is concerned, let $(y_{n})_{n}$ be a
sequence in $\mathbb{R}^{N}$ and $(s_{d})_{d}$ be a net in $\Delta (A)$ such
that $y_{n}\rightarrow y$ in $\mathbb{R}^{N}$ and $s_{d}\rightarrow s$ in $%
\Delta (A)$. Then the uniform continuity of $u\in A$ leads to $%
T(y_{n})u\rightarrow T(y)u$ in $BUC(\mathbb{R}^{N})$, and the continuity of $%
\mathcal{G}$ gives $\mathcal{G}(T(y_{n})u)\rightarrow \mathcal{G}(T(y)u)$,
the last convergence result being uniform in $\mathcal{C}(\Delta (A))$.
Hence $\mathcal{G}(T(y_{n})u)(s_{d})\rightarrow \mathcal{G}(T(y)u)(s)$,
which is equivalent to $\mathcal{G}(u)(\overline{T}(y_{n})s_{d})\rightarrow 
\mathcal{G}(u)(\overline{T}(y)s)$. As $\mathcal{C}(\Delta (A))$ separates
the points of $\Delta (A)$, this yields $\overline{T}(y_{n})s_{d}\rightarrow 
\overline{T}(y)s$ in $\Delta (A)$, which implies that the mapping $%
(y,s)\mapsto \overline{T}(y)s$, from $\mathbb{R}^{N}\times \Delta (A)$ to $%
\Delta (A)$, is continuous. It remains to check that $\beta $ is the
invariant measure for $\overline{T}$. But this easily comes from the
invariance under translations' property of the mean value and of the
integral representation (\ref{2.1}). We keep using the notation $T(y)$ for $%
\overline{T}(y)$, and the proof is complete.
\end{proof}

Next, let $B_{A}^{p}$ ($1\leq p<\infty $) denote the Besicovitch space
associated to $A$, that is the closure of $A$ with respect to the
Besicovitch seminorm 
\begin{equation*}
\left\Vert u\right\Vert _{p}=\left( \underset{r\rightarrow +\infty }{\lim
\sup }\frac{1}{\left\vert B_{r}\right\vert }\int_{B_{r}}\left\vert
u(y)\right\vert ^{p}dy\right) ^{1/p}
\end{equation*}%
where $B_{r}$ is the open ball of $\mathbb{R}^{N}$ of radius $r$. It is
known that $B_{A}^{p}$ is a complete seminormed vector space. Moreover, we
have $B_{A}^{q}\subset B_{A}^{p}$ for $1\leq p\leq q<\infty $. The following
properties are worth noticing \cite{27, NA}:

\begin{itemize}
\item[(\textbf{1)}] The Gelfand transformation $\mathcal{G}:A\rightarrow 
\mathcal{C}(\Delta (A))$ extends by continuity to a unique continuous linear
mapping, still denoted by $\mathcal{G}$, of $B_{A}^{p}$ into $L^{p}(\Delta
(A))$, which in turn induces an isometric isomorphism $\mathcal{G}_{1}$, of $%
\mathcal{B}_{A}^{p}=B_{A}^{p}/\mathcal{N}$ onto $L^{p}(\Delta (A))$ (where $%
\mathcal{N}=\{u\in B_{A}^{p}:\mathcal{G}(u)=0\}$). Furthermore if $u\in
B_{A}^{p}\cap L^{\infty }(\mathbb{R}^{N})$ then $\mathcal{G}(u)\in L^{\infty
}(\Delta (A))$ and $\left\Vert \mathcal{G}(u)\right\Vert _{L^{\infty
}(\Delta (A))}\leq \left\Vert u\right\Vert _{L^{\infty }(\mathbb{R}^{N})}$.

\item[(\textbf{2)}] The mean value $M$ viewed as defined on $A$, extends by
continuity to a positive continuous linear form (still denoted by $M$) on $%
B_{A}^{p}$ satisfying $M(u)=\int_{\Delta (A)}\mathcal{G}(u)d\beta $ ($u\in
B_{A}^{p}$). Furthermore, $M(u(\cdot +a))=M(u)$ for each $u\in B_{A}^{p}$
and all $a\in \mathbb{R}^{N}$.

\item[(\textbf{3)}] The dynamical system $T(y)$ is ergodic if and only if
for every $u\in B_{A}^{p}$\ such that $\left\Vert u-u(\cdot +y)\right\Vert
_{p}=0$\ for every $y\in \mathbb{R}^{N}$\ we have $\left\Vert
u-M(u)\right\Vert _{p}=0$.
\end{itemize}

In order to simplify the text, we will henceforth use the same letter $u$
(if there is no danger of confusion) to denote the equivalence class of an
element $u\in B_{A}^{p}$. The symbol $\varrho $ will denote the canonical
mapping of $B_{A}^{p}$ onto $\mathcal{B}_{A}^{p}=B_{A}^{p}/\mathcal{N}$. For 
$u\in \mathcal{B}_{A}^{p}$ (resp. $u\in B_{A}^{p}$) we shall set $\widehat{u}%
=\mathcal{G}_{1}(u)$ (resp. $\widehat{u}=\mathcal{G}(u)$).

In our study we will deal with a special class $\mathbb{A}$ of algebras with
mean value. An algebra with mean value $A$ is in $\mathbb{A}$ if it is
separable (hence its spectrum $\Delta (A)$ is a compact metric space) and
further $\Delta (A)$ has group property. Here below are some examples of
algebras with mean value verifying the above property. 1) Let $A=\mathcal{C}%
_{\text{per}}(Y)$ ($Y=(0,1)^{N}$) denote the algebra of $Y$-periodic complex
continuous functions on $\mathbb{R}^{N}$. It is well-known that $\Delta (A)=%
\mathbb{R}^{N}/\mathbb{Z}^{N}$ (the $N$-torus) which is a separable compact
metrizable topological group. 2) If we denote by $AP(\mathbb{R}^{N})$ the
space of complex continuous almost periodic functions on $\mathbb{R}^{N}$ 
\cite{7, 8}, then for any countable subgroup $\mathcal{R}$ of $\mathbb{R}%
^{N} $, denoting by $AP_{\mathcal{R}}(\mathbb{R}^{N})$ the subspace of $AP(%
\mathbb{R}^{N})$ consisting of functions that are uniformly approximated by
finite linear combinations of functions in the set $\{\gamma _{k}:k\in 
\mathcal{R}\}$ (where $\gamma _{k}(y)=\exp (2i\pi k\cdot y)$, $y\in \mathbb{R%
}^{N}$), we get that $AP_{\mathcal{R}}(\mathbb{R}^{N})\in \mathbb{A}$ with
the further property that its spectrum is a compact topological group
homeomorphic to the dual group $\widehat{\mathcal{R}}$ of $\mathcal{R}$
consisting of characters $\gamma _{k}$ ($k\in \mathcal{R}$) of $\mathbb{R}%
^{N}$; see \cite{AlmostPer} for details. 3) Finally, let $\mathcal{B}%
_{\infty }(\mathbb{R}^{N})$ denote the space of continuous complex functions
on $\mathbb{R}^{N}$ that have finite limit at infinity. It can be easily
shown that its spectrum consists of only one point, the Dirac mass at
infinity $\delta _{\infty }$, hence is a compact topological group. Also $%
\mathcal{B}_{\infty }(\mathbb{R}^{N})$ is separable (see \cite{26}), so that 
$\mathcal{B}_{\infty }(\mathbb{R}^{N})\in \mathbb{A}$. Some other examples
can be considered by taking a combination of the previous ones.

Some notations are in order. Let $A\in \mathbb{A}$. Since $\Delta (A)$ is a
topological group (which we henceforth denote additively), the mapping $%
(s,r)\mapsto s+r$ is continuous from $\Delta (A)\times \Delta (A)$ into $%
\Delta (A)$. $-s$ shall stand for the symmetrization of $s\in \Delta (A)$.
Now, in the above notation, if we take a look at the proof of Theorem \ref%
{t2.1} we observe that the dynamical system associated to the translations
is defined by 
\begin{equation*}
T(y)s=\delta _{y}+s\text{ for }(y,s)\in \mathbb{R}^{N}\times \Delta (A)
\end{equation*}%
where $\delta _{y}$ is the Dirac mass at $y$. With this in mind, for $%
s=\delta _{x}$ and $r=\delta _{y}$ we may see that we have in the same
notations, $s+r=\delta _{x+y}$ and $s-r=\delta _{x-y}$.

We get the following result.

\begin{proposition}
\label{p2.1}Let $A\in \mathbb{A}$. Let $u\in A$ and $a\in \mathbb{R}^{N}$.
Then 
\begin{equation}
\mathcal{G}(u(\cdot +a))=\mathcal{G}(u)(\cdot +\delta _{a}).  \label{2.2}
\end{equation}
\end{proposition}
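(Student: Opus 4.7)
The plan is to reduce the statement to Theorem \ref{t2.1} plus the explicit identification, made in the paragraph preceding the proposition, of the induced dynamical system with the group translation on $\Delta (A)$. Recall that in the proof of Theorem \ref{t2.1}, the homeomorphism $\overline{T}(y)$ of $\Delta (A)$ was defined through the relation
\[
\mathcal{G}(T(y)u)(s) = \mathcal{G}(u)(\overline{T}(y)s), \qquad y \in \mathbb{R}^{N},\ s \in \Delta (A),\ u \in A,
\]
where $T(y)u = u(\cdot + y)$. This is the only input from Theorem \ref{t2.1} that I would use.

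The second ingredient, specific to algebras in $\mathbb{A}$, is that $\Delta (A)$ carries a compatible abelian topological group structure and that the induced dynamical system takes the explicit form $\overline{T}(y)s = \delta _{y} + s$, as stated in the paragraph preceding the proposition. Substituting $y=a$ into the displayed formula above therefore yields
\[
\mathcal{G}(u(\cdot + a))(s) = \mathcal{G}(u)(\delta _{a} + s),
\]
and by commutativity of the group operation on $\Delta (A)$ (inherited from $\mathbb{R}^{N}$ via $\delta _{x} + \delta _{y} = \delta _{x+y}$) the right-hand side equals $\mathcal{G}(u)(s + \delta _{a})$. Since this holds for every $s \in \Delta (A)$, the equality (\ref{2.2}) follows as an identity in $\mathcal{C}(\Delta (A))$.

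There is essentially no technical obstacle here: the proposition is an unwinding of the construction from Theorem \ref{t2.1} combined with the group-theoretic interpretation of $\overline{T}(a)$ that is available precisely because $A \in \mathbb{A}$. If anything delicate were to be checked, it would be the agreement $\overline{T}(y)s = \delta _{y} + s$, but this is built into the setup (the embedding $\mathbb{R}^{N} \hookrightarrow \Delta (A)$, $y \mapsto \delta _{y}$, is a continuous group homomorphism whose image translates $\Delta (A)$ exactly by the dynamical system $\overline{T}$, and both descriptions give the same homeomorphism by uniqueness in the Banach--Stone step).
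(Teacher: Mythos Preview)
Your argument is correct, but it proceeds differently from the paper's. You invoke the Banach--Stone relation $\mathcal{G}(T(a)u)(s)=\mathcal{G}(u)(\overline{T}(a)s)$ from the proof of Theorem~\ref{t2.1} together with the identification $\overline{T}(a)s=\delta_a+s$ asserted in the paragraph preceding the proposition, and then simply read off (\ref{2.2}). The paper instead argues from scratch: it first checks the identity on Dirac points via $\mathcal{G}(u(\cdot+a))(\delta_y)=u(y+a)=\mathcal{G}(u)(\delta_{y+a})=\mathcal{G}(u)(\delta_y+\delta_a)$, and then extends to arbitrary $s\in\Delta(A)$ by density of $\{\delta_y:y\in\mathbb{R}^N\}$ (using that $\Delta(A)$ is metrizable for $A\in\mathbb{A}$), continuity of the group addition, and continuity of $\mathcal{G}(u)$ and $\mathcal{G}(u(\cdot+a))$.

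What each approach buys: yours is shorter and more conceptual, but it leans entirely on the identification $\overline{T}(a)s=\delta_a+s$ for \emph{general} $s$, which the paper states but does not prove in detail; indeed, that identification is essentially equivalent to (\ref{2.2}) itself once the Banach--Stone relation is in hand, so there is a whiff of circularity unless one is confident the group structure on $\Delta(A)$ has been set up independently (as it is in the concrete periodic and almost periodic examples). The paper's proof sidesteps this by using the group law only at the level of Dirac masses, where $\delta_y+\delta_a=\delta_{y+a}$ is unambiguous, and then letting density and continuity do the rest. Your closing remark correctly flags this as the one delicate point.
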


\begin{proof}
Let us recall that (\ref{2.2}) is equivalent to $\mathcal{G}(u(\cdot +a))(s)=%
\mathcal{G}(u)(s+\delta _{a})$ for any $s\in \Delta (A)$. So, let $y\in 
\mathbb{R}^{N}$, then 
\begin{eqnarray*}
\mathcal{G}(u(\cdot +a))(\delta _{y}) &=&\left\langle \delta _{y},u(\cdot
+a)\right\rangle =u(y+a) \\
&=&\left\langle \delta _{y+a},u\right\rangle =\mathcal{G}(u)(\delta _{y+a})
\\
&=&\mathcal{G}(u)(\delta _{y}+\delta _{a}).
\end{eqnarray*}%
Now, let $s\in \Delta (A)$; there exists a sequence $(y_{n})_{n}\subset 
\mathbb{R}^{N}$ such that $\delta _{y_{n}}\rightarrow s$ in $\Delta (A)$ as $%
n\rightarrow \infty $ (indeed the set $\{\delta _{y}:y\in \mathbb{R}^{N}\}$
is dense in $\Delta (A)$ which is a metric space). Since $\Delta (A)$ is a
topological group, the mapping $(s,r)\mapsto s+r$ is continuous from $\Delta
(A)\times \Delta (A)$ into $\Delta (A)$. So for fixed $r=\delta _{a}$ we
have that $\delta _{y_{n}}+\delta _{a}\rightarrow s+\delta _{a}$, and by the
continuity of $\mathcal{G}(u)$, it comes that $\mathcal{G}(u)(\delta
_{y_{n}}+\delta _{a})\rightarrow \mathcal{G}(u)(s+\delta _{a})$ and $%
\mathcal{G}(u(\cdot +a))(\delta _{y_{n}})\rightarrow \mathcal{G}(u(\cdot
+a))(s)$ as $n\rightarrow \infty $. The uniqueness of the limit gives by the
fact that $\mathcal{G}(u(\cdot +a))(\delta _{y_{n}})=\mathcal{G}(u)(\delta
_{y_{n}}+\delta _{a})$, $\mathcal{G}(u(\cdot +a))(s)=\mathcal{G}(u)(s+\delta
_{a})$.
\end{proof}

\subsection{$\Sigma $-convergence and convolution results}

We begin with the definition of the concept of $\Sigma $-convergence. Let $%
A\in \mathbb{A}$, and let $Q$ be an open subset of $\mathbb{R}^{N}$.
Throughout the paper the letter $E$ will denote any ordinary sequence $%
E=(\varepsilon _{n})$ (integers $n\geq 0$) with $0<\varepsilon _{n}\leq 1$
and $\varepsilon _{n}\rightarrow 0$ as $n\rightarrow \infty $.

\begin{definition}
\label{d2.1}\emph{(1) A sequence }$\left( u_{\varepsilon }\right)
_{\varepsilon >0}\subset L^{p}\left( Q\right) $\emph{\ }$(1\leq p<\infty )$%
\emph{\ is said to }weakly $\Sigma $-converge\emph{\ in }$L^{p}\left(
Q\right) $\emph{\ to some }$u_{0}\in L^{p}(Q;\mathcal{B}_{A}^{p})$\emph{\ if
as }$\varepsilon \rightarrow 0$\emph{, }%
\begin{equation}
\int_{Q}u_{\varepsilon }\left( x\right) \psi ^{\varepsilon }\left( x\right)
dx\rightarrow \iint_{Q\times \Delta (A)}\widehat{u}_{0}\left( x,s\right) 
\widehat{\psi }\left( x,s\right) dxd\beta \left( s\right)  \label{2.3}
\end{equation}%
\emph{for all }$\psi \in L^{p^{\prime }}\left( Q;A\right) $\emph{\ }$\left(
1/p^{\prime }=1-1/p\right) $\emph{\ where }$\psi ^{\varepsilon }\left(
x\right) =\psi \left( x,x/\varepsilon \right) $\emph{\ and }$\widehat{\psi }%
\left( x,\cdot \right) =\mathcal{G}(\psi \left( x,\cdot \right) )$\emph{\ }$%
a.e.$\emph{\ in }$x\in Q$\emph{. We denote this by }$u_{\varepsilon
}\rightarrow u_{0}$\emph{\ in }$L^{p}(Q)$\emph{-weak }$\Sigma $\emph{.}

\noindent \emph{(2) A sequence }$(u_{\varepsilon })_{\varepsilon >0}\subset
L^{p}(Q)$\emph{\ }$(1\leq p<\infty )$\emph{\ is said to }strongly $\Sigma $%
-converge\emph{\ in }$L^{p}(Q)$\emph{\ to some }$u_{0}\in L^{p}(Q;\mathcal{B}%
_{A}^{p})$\emph{\ if it is weakly }$\Sigma $\emph{-convergent and further
satisfies the following condition: }%
\begin{equation}
\left\Vert u_{\varepsilon }\right\Vert _{L^{p}(Q)}\rightarrow \left\Vert 
\widehat{u}_{0}\right\Vert _{L^{p}(Q\times \Delta (A))}.  \label{2.4}
\end{equation}%
\emph{We denote this by }$u_{\varepsilon }\rightarrow u_{0}$\emph{\ in }$%
L^{p}(Q)$\emph{-strong }$\Sigma $\emph{.}
\end{definition}

We have the following result whose proof can be found in \cite{27} (see also 
\cite{Casado}).

\begin{proposition}
\label{p2.2}\emph{(i)} Any bounded sequence $(u_{\varepsilon })_{\varepsilon
\in E}$ in $L^{p}(Q)$ (where $1<p<\infty $) admits a subsequence which is
weakly $\Sigma $-convergent in $L^{p}(Q)$.

\noindent \emph{(ii)} Any uniformly integrable sequence $(u_{\varepsilon
})_{\varepsilon \in E}$ in $L^{1}(Q)$ admits a subsequence which is weakly $%
\Sigma $-convergent in $L^{1}(Q)$.
\end{proposition}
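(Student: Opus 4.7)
The plan is to prove both parts by a single duality argument: convert $\Sigma$-convergence into weak-$\ast$ convergence of a family of bounded linear functionals on a separable Banach space, extract a subsequence via Banach--Alaoglu, and identify the limit via the Gelfand transform and the $M$-measure $\beta$, the difference between (i) and (ii) being only which compactness tool is invoked at the very end.

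For each $\varepsilon\in E$ I introduce the linear functional $T_\varepsilon$ on $L^{p'}(Q;A)$ defined by $T_\varepsilon(\psi)=\int_Q u_\varepsilon(x)\psi(x,x/\varepsilon)\,dx$. The pointwise bound $|\psi(x,x/\varepsilon)|\le\|\psi(x,\cdot)\|_A$ together with H\"older's inequality gives a uniform operator-norm bound: by $\sup_\varepsilon\|u_\varepsilon\|_{L^p(Q)}$ in case (i), and by $\sup_\varepsilon\|u_\varepsilon\|_{L^1(Q)}$ in case (ii) (with $T_\varepsilon$ viewed on $L^\infty(Q;A)$, or, to secure separability, on the subspace $\mathcal{C}_c(Q;A)$). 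Since $A\in\mathbb{A}$ is separable, so is $L^{p'}(Q;A)$ for $1<p'<\infty$; a standard diagonal/Banach--Alaoglu extraction then produces a subsequence $(\varepsilon')$ and a bounded linear functional $T_0$ with $T_{\varepsilon'}(\psi)\to T_0(\psi)$ for every admissible $\psi$.

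Next I identify $T_0$. The Gelfand transform extends fiberwise to an isometric isomorphism $L^{p'}(Q;A)\cong L^{p'}(Q;\mathcal{C}(\Delta(A)))$ whose image is dense in $L^{p'}(Q\times\Delta(A),dx\otimes d\beta)$. The key analytic step is the norm convergence
$$\int_Q|\psi(x,x/\varepsilon)|^{p'}\,dx\longrightarrow\int_Q M(|\psi(x,\cdot)|^{p'})\,dx=\iint_{Q\times\Delta(A)}|\widehat{\psi}|^{p'}\,dx\,d\beta,$$
which follows by applying (MV) pointwise in $x$ to $|\psi(x,\cdot)|^{p'}\in A$ (available via the continuous functional calculus in the C*-algebra $A$) and then invoking dominated convergence with the envelope $\|\psi(x,\cdot)\|_A^{p'}\in L^1(Q)$. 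Passing to the limit in $|T_{\varepsilon'}(\psi)|\le\|u_{\varepsilon'}\|_{L^p}\|\psi^{\varepsilon'}\|_{L^{p'}(Q)}$ yields $|T_0(\psi)|\le C\,\|\widehat{\psi}\|_{L^{p'}(Q\times\Delta(A))}$, so $T_0$ extends uniquely by continuity to a bounded linear functional on the full space $L^{p'}(Q\times\Delta(A))$.

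Finally I produce the $\Sigma$-limit. For (i), reflexivity of $L^{p'}(Q\times\Delta(A))$ (since $1<p'<\infty$) represents the extended $T_0$ by some $\widehat{u}_0\in L^p(Q\times\Delta(A))$, and setting $u_0:=\mathcal{G}_1^{-1}(\widehat{u}_0)\in L^p(Q;\mathcal{B}_A^p)$ delivers the desired weakly $\Sigma$-convergent subsequence. For (ii), the uniform integrability of $(u_\varepsilon)$ is exactly the hypothesis required to apply Dunford--Pettis: it guarantees that the limiting measure on $Q\times\Delta(A)$ determined by $T_0$ is absolutely continuous with density in $L^1(Q\times\Delta(A))$, again producing an admissible $\widehat{u}_0$. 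The main obstacle is the middle step, the sharp norm passage $\|\psi^\varepsilon\|_{L^{p'}(Q)}\to\|\widehat{\psi}\|_{L^{p'}(Q\times\Delta(A))}$; this is where the algebra-with-mean-value structure and the translation invariance of $\beta$ under the dynamical system of Theorem~\ref{t2.1} are decisive, forcing the limit measure to sit on $Q\times\Delta(A)$ and to be weighted precisely by $dx\otimes d\beta$.
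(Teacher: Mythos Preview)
The paper does not supply its own proof of this proposition: it simply states that the proof can be found in \cite{27} (see also \cite{Casado}). Your sketch is precisely the standard argument used in those references --- define the linear functionals $T_\varepsilon(\psi)=\int_Q u_\varepsilon\psi^\varepsilon\,dx$, obtain a uniform bound, extract a weak-$\ast$ cluster point on a separable test space, use the mean-value property to establish the sharp norm limit $\|\psi^\varepsilon\|_{L^{p'}(Q)}\to\|\widehat\psi\|_{L^{p'}(Q\times\Delta(A))}$, extend $T_0$ by continuity, and then represent it by duality (reflexivity for $1<p<\infty$, Dunford--Pettis for $p=1$). So your approach coincides with the one the paper defers to.

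One remark on the $L^1$ case: your Dunford--Pettis step is the right idea but is stated a bit elliptically. What is actually needed is to show that the limiting functional $T_0$, viewed as a finitely additive set function on $Q\times\Delta(A)$, is countably additive and absolutely continuous with respect to $dx\otimes d\beta$. The uniform integrability of $(u_\varepsilon)$ in $L^1(Q)$ is used to check equi-integrability of the induced functionals against characteristic test functions $\chi_B\otimes\phi$ with $\phi\in A$, and one then invokes a Radon--Nikodym argument on $Q\times\Delta(A)$ rather than Dunford--Pettis on $Q$ alone. This is the point where the cited references do the real work; your outline is correct but would need this extra paragraph to stand on its own.
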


We recall that A sequence $(u_{\varepsilon })_{\varepsilon >0}$\ in $%
L^{1}(Q) $\ is said to be \emph{uniformly integrable}\ if it\ is bounded in $%
L^{1}(Q)$\ and\ further satisfies the property that $\sup_{\varepsilon
>0}\int_{X}\left\vert u_{\varepsilon }\right\vert dx\rightarrow 0$\ for any
integrable set $X\subset Q$\ for which $\left\vert X\right\vert \rightarrow
0 $, where $\left\vert X\right\vert $\ denotes the Lebesgue measure of $X$.

Now, fix $t\in \mathbb{R}^{N}$. Then $(\delta _{t/\varepsilon
})_{\varepsilon >0}$ is a sequence in the compact metric space $\Delta (A)$,
hence it possesses a convergent subsequence still denoted by $(\delta
_{t/\varepsilon })_{\varepsilon >0}$. In the sequel we shall consider such a
subsequence. Let $r\in \Delta (A)$ be such that 
\begin{equation}
\delta _{\frac{t}{\varepsilon }}\rightarrow r\text{ in }\Delta (A)\text{ as }%
\varepsilon \rightarrow 0\text{.}  \label{2.5}
\end{equation}%
Finally let $Q$ be an open subset in $\mathbb{R}^{N}$, and let $%
(u_{\varepsilon })_{\varepsilon >0}$ be a sequence in $L^{p}(Q)$ ($1\leq
p<\infty $) which is weakly $\Sigma $-convergent to $u_{0}\in L^{p}(Q;%
\mathcal{B}_{A}^{p})$. Define the sequence $(v_{\varepsilon })_{\varepsilon
>0}$ as follows: 
\begin{equation*}
v_{\varepsilon }(x)=u_{\varepsilon }(x+t)\text{, }x\in Q-t\text{.}
\end{equation*}%
We then get the following result.

\begin{proposition}
\label{p2.4}Assume \emph{(\ref{2.5})} holds. Then as $\varepsilon
\rightarrow 0$ 
\begin{equation*}
v_{\varepsilon }\rightarrow v_{0}\text{ in }L^{p}(Q-t)\text{-weak }\Sigma
\end{equation*}%
where $v_{0}\in L^{p}(Q-t,\mathcal{B}_{A}^{p})$ is defined by $\widehat{v}%
_{0}(x,s)=\widehat{u}_{0}(x+t,s+r)$ for $(x,s)\in (Q-t)\times \Delta (A)$.
\end{proposition}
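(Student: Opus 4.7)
The plan is to verify the defining weak $\Sigma$-convergence identity of Definition \ref{d2.1} for $v_\varepsilon$ on a dense subfamily of $L^{p'}(Q-t;A)$, namely the tensor products $\psi(x,z)=\varphi(x)g(z)$ with $\varphi\in\mathcal{C}_c(Q-t)$ and $g\in A$, and then to extend by density using the uniform bound $\|v_\varepsilon\|_{L^p(Q-t)}=\|u_\varepsilon\|_{L^p(Q)}$ (finite, since any weakly $\Sigma$-convergent sequence in $L^p$ is norm-bounded; for $p=1$ it is uniformly integrable).

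First I would change variables $y=x+t$ in the testing integral to obtain
\begin{equation*}
\int_{Q-t} v_\varepsilon(x)\psi^\varepsilon(x)\,dx
=\int_Q u_\varepsilon(y)\,\varphi(y-t)\,g_\varepsilon(y/\varepsilon)\,dy,
\qquad g_\varepsilon(z):=g(z-t/\varepsilon).
\end{equation*}
By translation invariance of $A$, $g_\varepsilon\in A$, but it depends on $\varepsilon$, so the weak $\Sigma$-convergence of $u_\varepsilon$ cannot be applied directly. This is the main obstacle, and the key is to freeze an $\varepsilon$-independent test function up to a vanishing error. Applying Proposition \ref{p2.1} with $a=-t/\varepsilon$ gives $\widehat{g_\varepsilon}(s)=\widehat{g}(s-\delta_{t/\varepsilon})$. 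Combining hypothesis \eqref{2.5} with the continuity of the group operation on $\Delta(A)$ (Theorem \ref{t2.1}) and the uniform continuity of $\widehat{g}\in\mathcal{C}(\Delta(A))$, I obtain $\widehat{g_\varepsilon}\to\widehat{g}(\cdot-r)$ uniformly on $\Delta(A)$. Defining $h\in A$ by $\widehat{h}(s):=\widehat{g}(s-r)$ and using that $\mathcal{G}$ is an isometry, this yields $\|g_\varepsilon-h\|_\infty\to 0$.

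I would then split
\begin{equation*}
\int_Q u_\varepsilon(y)\varphi(y-t)g_\varepsilon(y/\varepsilon)\,dy
=\int_Q u_\varepsilon(y)\varphi(y-t)h(y/\varepsilon)\,dy + R_\varepsilon,
\end{equation*}
where $|R_\varepsilon|\leq\|\varphi\|_\infty\|g_\varepsilon-h\|_\infty\|u_\varepsilon\|_{L^1(\operatorname{supp}\varphi(\cdot-t))}\to 0$ by the above. The principal term now has the $\varepsilon$-independent test $(y,z)\mapsto\varphi(y-t)h(z)\in L^{p'}(Q;A)$, so the weak $\Sigma$-convergence of $u_\varepsilon$ gives
\begin{equation*}
\int_Q u_\varepsilon(y)\varphi(y-t)h(y/\varepsilon)\,dy
\longrightarrow
\iint_{Q\times\Delta(A)}\widehat{u}_0(y,s)\,\varphi(y-t)\,\widehat{g}(s-r)\,dy\,d\beta(s).
\end{equation*}

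Finally I would transform the limit by the change of variables $y=x+t$ in the $y$-integral and $s\mapsto s+r$ in the $\Delta(A)$-integral. The latter is justified because $\beta$ is invariant under translation by every $\delta_y$, $y\in\mathbb{R}^N$ (Theorem \ref{t2.1}), and this extends to invariance under every $r\in\Delta(A)$ by density of $\{\delta_y:y\in\mathbb{R}^N\}$ in the compact metric group $\Delta(A)$ together with the continuity of the group operation. The limit becomes
\begin{equation*}
\iint_{(Q-t)\times\Delta(A)}\widehat{u}_0(x+t,s+r)\,\varphi(x)\,\widehat{g}(s)\,dx\,d\beta(s)
=\iint_{(Q-t)\times\Delta(A)}\widehat{v}_0(x,s)\,\widehat{\psi}(x,s)\,dx\,d\beta(s),
\end{equation*}
which is exactly what is required for $\psi=\varphi\otimes g$. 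Density of such tensor products in $L^{p'}(Q-t;A)$, combined with the uniform $L^p$-boundedness of $v_\varepsilon$ on the left and the obvious boundedness of the limiting bilinear form on the right, concludes the proof.
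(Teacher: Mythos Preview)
Your proof is correct and follows essentially the same approach as the paper's: change variables, recognize that the translated test $g(\cdot-t/\varepsilon)$ is $\varepsilon$-dependent, approximate it uniformly by an $\varepsilon$-independent element of $A$ via Proposition~\ref{p2.1} and the isometry of $\mathcal{G}$, then change variables in the limit using the translation invariance of $\beta$. The only difference is that the paper introduces an auxiliary sequence $y_n\in\mathbb{R}^N$ with $\delta_{y_n}\to r$ and performs a double limit ($\varepsilon\to 0$ then $n\to\infty$), whereas you define the target $h=\mathcal{G}^{-1}(\widehat{g}(\cdot-r))$ directly---a slightly more streamlined variant of the same idea.
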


\begin{proof}
Let $\varphi \in \mathcal{C}_{0}^{\infty }(Q-t)$ and $\psi \in A$. Let $%
(y_{n})_{n}$ be an ordinary sequence (independent of $\varepsilon $) such
that 
\begin{equation*}
\delta _{y_{n}}\rightarrow r\text{ in }\Delta (A)\text{ as }n\rightarrow
\infty \text{.}
\end{equation*}%
We have 
\begin{eqnarray*}
&&\int_{Q-t}u_{\varepsilon }(x+t)\varphi (x)\psi \left( \frac{x}{\varepsilon 
}\right) dx \\
&=&\int_{Q}u_{\varepsilon }(x)\varphi (x-t)\psi \left( \frac{x}{\varepsilon }%
-\frac{t}{\varepsilon }\right) dx \\
&=&\int_{Q}u_{\varepsilon }(x)\varphi (x-t)\left[ \psi \left( \frac{x}{%
\varepsilon }-\frac{t}{\varepsilon }\right) -\psi \left( \frac{x}{%
\varepsilon }-y_{n}\right) \right] dx \\
&&+\int_{Q}u_{\varepsilon }(x)\varphi (x-t)\psi \left( \frac{x}{\varepsilon }%
-y_{n}\right) dx \\
&=&(I)+(II).
\end{eqnarray*}%
On one hand, as $\varepsilon \rightarrow 0$, 
\begin{equation*}
(II)\rightarrow \iint_{Q\times \Delta (A)}\widehat{u}_{0}(x,s)\varphi (x-t)%
\mathcal{G}(\psi (\cdot -y_{n}))(s)dxd\beta (s).
\end{equation*}%
But we have $\mathcal{G}(\psi (\cdot -y_{n}))=\widehat{\psi }(\cdot -\delta
_{y_{n}})\rightarrow \widehat{\psi }(\cdot -r)$ uniformly in $\mathcal{C}%
(\Delta (A))$ as $n\rightarrow \infty $. Hence 
\begin{eqnarray*}
&&\iint_{Q\times \Delta (A)}\widehat{u}_{0}(x,s)\varphi (x-t)\mathcal{G}%
(\psi (\cdot -y_{n}))(s)dxd\beta (s) \\
&\rightarrow &\iint_{Q\times \Delta (A)}\widehat{u}_{0}(x,s)\varphi (x-t)%
\widehat{\psi }(s-r)dxd\beta (s)\text{ as }n\rightarrow \infty \text{,}
\end{eqnarray*}%
and 
\begin{equation*}
\iint_{Q\times \Delta (A)}\widehat{u}_{0}(x,s)\varphi (x-t)\widehat{\psi }%
(s-r)dxd\beta =\iint_{(Q-t)\times \Delta (A)}\widehat{u}_{0}(x+t,s+r)\varphi
(x)\widehat{\psi }(s)dxd\beta .
\end{equation*}%
On the other hand, 
\begin{eqnarray*}
\left\vert (I)\right\vert &\leq &\int_{Q}\left\vert u_{\varepsilon
}(x)\right\vert \left\vert \varphi (x-t)\right\vert \left\vert \psi \left( 
\frac{x}{\varepsilon }-\frac{t}{\varepsilon }\right) -\psi \left( \frac{x}{%
\varepsilon }-y_{n}\right) \right\vert dx \\
&\leq &c\sup_{z\in \mathbb{R}^{N}}\left\vert \psi \left( z-\frac{t}{%
\varepsilon }\right) -\psi \left( z-y_{n}\right) \right\vert \\
&=&c\left\Vert \psi \left( \cdot -\frac{t}{\varepsilon }\right) -\psi \left(
\cdot -y_{n}\right) \right\Vert _{\infty } \\
&=&c\left\Vert \mathcal{G}\left( \psi \left( \cdot -\frac{t}{\varepsilon }%
\right) -\psi \left( \cdot -y_{n}\right) \right) \right\Vert _{\infty }\text{
(}\mathcal{G}\text{ is an isometry)} \\
&=&c\left\Vert \widehat{\psi }\left( \cdot -\delta _{\frac{t}{\varepsilon }%
}\right) -\widehat{\psi }\left( \cdot -\delta _{y_{n}}\right) \right\Vert
_{\infty }.
\end{eqnarray*}%
Now, using the uniform continuity of $\widehat{\psi }$, we obtain 
\begin{equation*}
\left\Vert \widehat{\psi }\left( \cdot -\delta _{\frac{t}{\varepsilon }%
}\right) -\widehat{\psi }\left( \cdot -\delta _{y_{n}}\right) \right\Vert
_{\infty }\rightarrow 0\text{ as }\varepsilon \rightarrow 0\text{ and next }%
n\rightarrow \infty \text{.}
\end{equation*}%
It therefore follows that, as $\varepsilon \rightarrow 0$, 
\begin{equation*}
\int_{Q-t}u_{\varepsilon }(x+t)\varphi (x)\psi \left( \frac{x}{\varepsilon }%
\right) dx\rightarrow \iint_{(Q-t)\times \Delta (A)}\widehat{u}%
_{0}(x+t,s+r)\varphi (x)\widehat{\psi }(s)dxd\beta .
\end{equation*}%
This concludes the proof.
\end{proof}

The next important result deals with the convergence of convolution
sequences. Let $p\geq 1$ be a real number, and let $(u_{\varepsilon
})_{\varepsilon >0}\subset L^{p}(Q)$ (where we assume here $Q$ to be
bounded) and $(v_{\varepsilon })_{\varepsilon >0}\subset L^{1}(\mathbb{R}%
^{N})$ be two sequences. One may view $u_{\varepsilon }$ as defined in the
whole $\mathbb{R}^{N}$ by taking its zero extension off $Q$. Define 
\begin{equation*}
(u_{\varepsilon }\ast v_{\varepsilon })(x)=\int_{\mathbb{R}%
^{N}}u_{\varepsilon }(t)v_{\varepsilon }(x-t)dt\ \ (x\in \mathbb{R}^{N}).
\end{equation*}%
For $u\in L^{p}(\mathbb{R}^{N};\mathcal{B}_{A}^{p})$ and $v\in L^{1}(\mathbb{%
R}^{N};\mathcal{B}_{A}^{1})$ we define the convolution product $u\ast \ast v$
as follows 
\begin{equation*}
\mathcal{G}_{1}(u\ast \ast v)(x,s):=\iint_{\mathbb{R}^{N}\times \Delta (A)}%
\widehat{u}(t,r)\widehat{v}(x-t,s-r)dtd\beta (r)
\end{equation*}%
for $(x,s)\in \mathbb{R}^{N}\times \Delta (A)$. We denote $\mathcal{G}%
_{1}(u\ast \ast v)$ by $(\widehat{u}\ast \ast \widehat{v})$. This gives a
function in the space $L^{p}(\mathbb{R}^{N};\mathcal{B}_{A}^{p})$ with the
property 
\begin{equation*}
\left\Vert u\ast \ast v\right\Vert _{L^{p}(\mathbb{R}^{N};\mathcal{B}%
_{A}^{p})}\leq \left\Vert u\right\Vert _{L^{p}(\mathbb{R}^{N};\mathcal{B}%
_{A}^{p})}\left\Vert v\right\Vert _{L^{1}(\mathbb{R}^{N};\mathcal{B}%
_{A}^{1})}.
\end{equation*}%
The above inequality can be checked exactly as the Young inequality for
convolution. We have the following result.

\begin{theorem}
\label{t2.2}Let $(u_{\varepsilon })_{\varepsilon }$ and $(v_{\varepsilon
})_{\varepsilon }$ be as above. Assume that, as $\varepsilon \rightarrow 0$, 
$u_{\varepsilon }\rightarrow u_{0}$ in $L^{p}(Q)$-weak $\Sigma $ and $%
v_{\varepsilon }\rightarrow v_{0}$ in $L^{1}(\mathbb{R}^{N})$-strong $\Sigma 
$, where $u_{0}$ and $v_{0}$ are in $L^{p}(Q;\mathcal{B}_{A}^{p})$ and $%
L^{1}(\mathbb{R}^{N};\mathcal{B}_{A}^{1})$ respectively. Then, as $%
\varepsilon \rightarrow 0$, 
\begin{equation*}
u_{\varepsilon }\ast v_{\varepsilon }\rightarrow u_{0}\ast \ast v_{0}\text{
in }L^{p}(Q)\text{-weak }\Sigma \text{.}
\end{equation*}
\end{theorem}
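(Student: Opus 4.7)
The plan is to test the convolution against an arbitrary admissible function $\varphi(x)\psi(x/\varepsilon)$, with $\varphi\in\mathcal{C}_{0}^{\infty}(Q)$ and $\psi\in A$, use Fubini to recast the integral as a pairing of $u_\varepsilon$ against a sequence constructed from $v_\varepsilon$ and the test function, and then identify the limit via the weak $\Sigma$-convergence of $u_\varepsilon$. With
$$I_\varepsilon := \int_Q (u_\varepsilon\ast v_\varepsilon)(x)\varphi(x)\psi(x/\varepsilon)\,dx,$$
the substitution $y=x-\tau$ gives
$$I_\varepsilon=\int_{\mathbb{R}^N} u_\varepsilon(\tau)G_\varepsilon(\tau)\,d\tau,\qquad G_\varepsilon(\tau):=\int_{\mathbb{R}^N} v_\varepsilon(y)\varphi(y+\tau)\psi\bigl((y+\tau)/\varepsilon\bigr)\,dy,$$
and the strategy is to show that $G_\varepsilon(\tau)=\Psi(\tau,\tau/\varepsilon)+o(1)$ in $L^{p'}(\mathbb{R}^N)$ for some $\Psi\in L^{p'}(\mathbb{R}^N;A)$, so that the weak $\Sigma$-convergence of $u_\varepsilon$ can be applied.

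The first reduction uses the strong $\Sigma$-convergence of $v_\varepsilon$ together with the density of tensor products in $L^1(\mathbb{R}^N;A)$: for every $\eta>0$ one picks a finite sum $V(x,y)=\sum_i\alpha_i(x)\gamma_i(y)$, with $\alpha_i$ compactly supported continuous on $\mathbb{R}^N$ and $\gamma_i\in A$, such that $\|v_\varepsilon-V^\varepsilon\|_{L^1(\mathbb{R}^N)}<\eta$ for all small $\varepsilon$, where $V^\varepsilon(x)=V(x,x/\varepsilon)$. Young's inequality then yields $\|u_\varepsilon\ast(v_\varepsilon-V^\varepsilon)\|_{L^p(Q)}\le C\eta$, so by linearity it suffices to handle a single tensor product $V(x,y)=\alpha(x)\gamma(y)$. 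For this choice,
$$G_\varepsilon(\tau)=\int\alpha(z)\varphi(z+\tau)\,\bigl(\gamma\cdot T(\tau/\varepsilon)\psi\bigr)(z/\varepsilon)\,dz,$$
and because $\gamma\cdot T(\tau/\varepsilon)\psi\in A$, axiom (\textit{MV}) combined with Proposition \ref{p2.1} suggests the natural candidate
$$\Psi(\tau,y):=\Xi(y)\,h(\tau),\quad \Xi(y):=M\bigl(\gamma\cdot T(y)\psi\bigr)=\int_{\Delta(A)}\widehat{\gamma}(r-\delta_y)\widehat{\psi}(r)\,d\beta(r),\quad h(\tau):=\int\alpha(z)\varphi(z+\tau)\,dz.$$
The group structure of $\Delta(A)$ (guaranteed by $A\in\mathbb{A}$) together with the continuity of $\widehat\gamma,\widehat\psi$ ensure $\Xi\in A$, so $\Psi\in L^{p'}(\mathbb{R}^N;A)$.

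The main obstacle is upgrading the pointwise convergence
$$\int\alpha(z)\varphi(z+\tau)\bigl(\gamma\cdot T(\tau/\varepsilon)\psi\bigr)(z/\varepsilon)\,dz - h(\tau)M\bigl(\gamma\cdot T(\tau/\varepsilon)\psi\bigr)\longrightarrow 0\quad(\tau\text{ fixed}),$$
which for each fixed $\tau$ follows from (\textit{MV}) applied to $\gamma\cdot T(\tau/\varepsilon)\psi$, to the $L^{p'}(\mathbb{R}^N)$-replacement $G_\varepsilon(\tau)=\Psi(\tau,\tau/\varepsilon)+o(1)$; the subtlety is that the subsequence along which $\delta_{\tau/\varepsilon}$ converges in $\Delta(A)$ depends on $\tau$. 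This uniformization relies on the compact support of $\alpha,\varphi$, the uniform continuity of $\widehat\gamma,\widehat\psi$ on the compact metric space $\Delta(A)$, and the translation technique of Proposition \ref{p2.4}. Once the replacement is established, the weak $\Sigma$-convergence of $u_\varepsilon$ gives
$$I_\varepsilon\longrightarrow\iint_{\mathbb{R}^N\times\Delta(A)}\widehat{u}_0(\tau,r)\widehat{\Psi}(\tau,r)\,d\tau\,d\beta(r),$$
and a short Fubini computation using Proposition \ref{p2.1}, the translation-invariance of $\beta$ (Theorem \ref{t2.1}), and the definition of $u_0\ast\ast v_0$ identifies this limit with $\iint_{Q\times\Delta(A)}\mathcal{G}_1(u_0\ast\ast v_0)(x,s)\varphi(x)\widehat{\psi}(s)\,dx\,d\beta(s)$. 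A standard density argument extending from tensor-product test functions $\varphi\psi$ to general $\Phi\in L^{p'}(Q;A)$ then completes the proof.
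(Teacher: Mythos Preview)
Your overall strategy---reduce $v_\varepsilon$ to a nice corrector $V^\varepsilon$ via strong $\Sigma$-convergence, rewrite the convolution integral as a pairing of $u_\varepsilon$ against an induced test function $G_\varepsilon$, and then recognise $G_\varepsilon$ as (approximately) the trace $\Psi^\varepsilon$ of some $\Psi\in L^{p'}(\mathbb{R}^N;A)$ so that the weak $\Sigma$-convergence of $u_\varepsilon$ applies---is exactly the architecture of the paper's proof. The paper approximates $v_0$ by a single $\psi_0\in\mathcal{K}(\mathbb{R}^N;A)$ rather than by tensor products, but this is cosmetic.

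There is, however, a genuine gap at precisely the point you flag as ``the main obstacle''. You assert that the pointwise convergence
\[
\int\alpha(z)\varphi(z+\tau)\bigl(\gamma\cdot T(\tau/\varepsilon)\psi\bigr)(z/\varepsilon)\,dz - h(\tau)\,M\bigl(\gamma\cdot T(\tau/\varepsilon)\psi\bigr)\;\longrightarrow\;0
\]
``follows from (MV) applied to $\gamma\cdot T(\tau/\varepsilon)\psi$''. It does not: axiom (MV) is stated for a \emph{fixed} element of $A$, whereas here the element $\gamma\cdot T(\tau/\varepsilon)\psi$ varies with $\varepsilon$. One needs either a uniformity of (MV) over the translation orbit $\{\gamma\cdot T(a)\psi:a\in\mathbb{R}^N\}$ (which is relatively compact in $A$, since $\widehat\psi$ is uniformly continuous on the compact $\Delta(A)$), or a subsequence argument. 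You list the right ingredients but never carry out either argument; your proof then proceeds as if the replacement were already established. Once the pointwise convergence is actually proved (along the full sequence, via a sub-subsequence argument showing the limit is always $0$), the $L^{p'}$ upgrade you worry about is in fact automatic by dominated convergence, since everything is uniformly bounded and supported in a fixed compact set.

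For comparison, the paper does not attempt the $L^{p'}$ upgrade at all. It defines $V_\varepsilon(t)=G_\varepsilon(t)-\Psi^\varepsilon(t)$ and shows, via Proposition~\ref{p2.4} and the subsequence trick, that $V_\varepsilon(t)\to 0$ for each $t$ and $|V_\varepsilon|\le c$. It then invokes \cite[Lemma~3.4]{BBM}, which asserts directly that $\int u_\varepsilon V_\varepsilon\to 0$ whenever $u_\varepsilon$ converges weakly in $L^1$ and $V_\varepsilon$ is uniformly bounded and converges pointwise. This bypasses any need to control $G_\varepsilon-\Psi^\varepsilon$ in $L^{p'}$ and is the cleanest way to close the argument.
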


\begin{proof}
Let $\eta >0$ and let $\psi _{0}\in \mathcal{K}(\mathbb{R}^{N};A)$ (the
space of continuous functions from $\mathbb{R}^{N}$ into $A$ with compact
support) be such that $\left\Vert \widehat{v}_{0}-\widehat{\psi }%
_{0}\right\Vert _{L^{1}(\mathbb{R}^{N}\times \Delta (A))}\leq \frac{\eta }{2}
$. Since $v_{\varepsilon }\rightarrow v_{0}$ in $L^{1}(\mathbb{R}^{N})$%
-strong $\Sigma $ we have that $v_{\varepsilon }-\psi _{0}^{\varepsilon
}\rightarrow v_{0}-\psi _{0}$ in $L^{1}(\mathbb{R}^{N})$-strong $\Sigma $,
hence $\left\Vert v_{\varepsilon }-\psi _{0}^{\varepsilon }\right\Vert
_{L^{1}(\mathbb{R}^{N})}\rightarrow \left\Vert \widehat{v}_{0}-\widehat{\psi 
}_{0}\right\Vert _{L^{1}(\mathbb{R}^{N}\times \Delta (A))}$ as $\varepsilon
\rightarrow 0$. So, there is $\alpha >0$ such that 
\begin{equation}
\left\Vert v_{\varepsilon }-\psi _{0}^{\varepsilon }\right\Vert _{L^{1}(%
\mathbb{R}^{N})}\leq \eta \text{ for }0<\varepsilon \leq \alpha \text{.}
\label{2.6}
\end{equation}%
For $f\in \mathcal{K}(Q;A)$, we have (by still denoting by $u_{\varepsilon }$
the zero extension of $u_{\varepsilon }$ off $Q$)%
\begin{eqnarray*}
\int_{Q}(u_{\varepsilon }\ast v_{\varepsilon })(x)f\left( x,\frac{x}{%
\varepsilon }\right) dx &=&\int_{Q}\left( \int_{\mathbb{R}%
^{N}}u_{\varepsilon }(t)v_{\varepsilon }(x-t)dt\right) f\left( x,\frac{x}{%
\varepsilon }\right) dx \\
&=&\int_{\mathbb{R}^{N}}u_{\varepsilon }(t)\left[ \int_{\mathbb{R}%
^{N}}v_{\varepsilon }(x-t)f\left( x,\frac{x}{\varepsilon }\right) dx\right]
dt \\
&=&\int_{\mathbb{R}^{N}}u_{\varepsilon }(t)\left[ \int_{\mathbb{R}%
^{N}}v_{\varepsilon }(x)f\left( x+t,\frac{x}{\varepsilon }+\frac{t}{%
\varepsilon }\right) dx\right] dt \\
&=&\int_{\mathbb{R}^{N}}u_{\varepsilon }(t)\left[ \int_{\mathbb{R}%
^{N}}(v_{\varepsilon }(x)-\psi _{0}^{\varepsilon }(x))f^{\varepsilon }(x+t)dx%
\right] dt \\
&&+\int_{\mathbb{R}^{N}}u_{\varepsilon }(t)\left( \int_{\mathbb{R}^{N}}\psi
_{0}^{\varepsilon }(x)f^{\varepsilon }(x+t)dx\right) dt \\
&=&(I)+(II).
\end{eqnarray*}%
On one hand one has $(I)=\int_{Q}[u_{\varepsilon }\ast (v_{\varepsilon
}-\psi _{0}^{\varepsilon })](x)f^{\varepsilon }(x)dx$ and 
\begin{eqnarray*}
\left\vert (I)\right\vert &\leq &\left\Vert u_{\varepsilon }\right\Vert
_{L^{p}(Q)}\left\Vert v_{\varepsilon }-\psi _{0}^{\varepsilon }\right\Vert
_{L^{1}(\mathbb{R}^{N})}\left\Vert f^{\varepsilon }\right\Vert
_{L^{p^{\prime }}(Q)} \\
&\leq &c\left\Vert v_{\varepsilon }-\psi _{0}^{\varepsilon }\right\Vert
_{L^{1}(\mathbb{R}^{N})}
\end{eqnarray*}%
where $c$ is a positive constant independent of $\varepsilon $. It follows
that 
\begin{equation}
\left\vert (I)\right\vert \leq c\eta \text{ for }0<\varepsilon \leq \alpha 
\text{.}  \label{2.7}
\end{equation}%
On the other hand, in view of Proposition \ref{p2.4}, we have, as $%
\varepsilon \rightarrow 0$, 
\begin{eqnarray*}
\int_{\mathbb{R}^{N}}\psi _{0}^{\varepsilon }(x)f^{\varepsilon }(x+t)dx
&=&\int_{\mathbb{R}^{N}}\psi _{0}^{\varepsilon }(x-t)f^{\varepsilon }(x)dx \\
&\rightarrow &\iint_{\mathbb{R}^{N}\times \Delta (A)}\widehat{\psi }%
_{0}(x-t,s-r)\widehat{f}(x,s)dxd\beta (s),
\end{eqnarray*}%
where $r=\lim \delta _{t/\varepsilon }$ (for a suitable subsequence of $%
\varepsilon \rightarrow 0$) in $\Delta (A)$. So let $\Phi :\mathbb{R}%
^{N}\times \Delta (A)\rightarrow \mathbb{R}$ be defined by 
\begin{equation*}
\Phi (t,r)=\iint_{\mathbb{R}^{N}\times \Delta (A)}\widehat{\psi }%
_{0}(x-t,s-r)\widehat{f}(x,s)dxd\beta (s),\ (t,r)\in \mathbb{R}^{N}\times
\Delta (A).
\end{equation*}%
Then it can be easily checked that $\Phi \in \mathcal{K}(\mathbb{R}^{N};%
\mathcal{C}(\Delta (A)))$, so that there is a function $\Psi \in \mathcal{K}(%
\mathbb{R}^{N};A)$ with $\Phi =\mathcal{G}\circ \Psi $. We can therefore
define the trace $\Psi ^{\varepsilon }(t)=\Psi (t,t/\varepsilon )$ ($t\in 
\mathbb{R}^{N}$) and get 
\begin{eqnarray*}
\Psi ^{\varepsilon }(t) &=&\left\langle \delta _{\frac{t}{\varepsilon }},%
\mathcal{G}(\Psi (t,\cdot ))\right\rangle \\
&=&\left\langle \delta _{\frac{t}{\varepsilon }},\Phi (t,\cdot )\right\rangle
\\
&=&\Phi \left( t,\delta _{\frac{t}{\varepsilon }}\right) =\iint_{\mathbb{R}%
^{N}\times \Delta (A)}\widehat{\psi }_{0}(x-t,s-\delta _{\frac{t}{%
\varepsilon }})\widehat{f}(x,s)dxd\beta (s).
\end{eqnarray*}

Next, we have 
\begin{eqnarray*}
(II) &=&\int_{\mathbb{R}^{N}}u_{\varepsilon }(t)\left( \int_{\mathbb{R}%
^{N}}\psi _{0}^{\varepsilon }(x)f^{\varepsilon }(x+t)dx-\Psi ^{\varepsilon
}(t)\right) dt+\int_{\mathbb{R}^{N}}u_{\varepsilon }(t)\Psi ^{\varepsilon
}(t)dt \\
&=&(II_{1})+(II_{2}).
\end{eqnarray*}%
As for $(II_{1})$, set 
\begin{equation*}
V_{\varepsilon }(t)=\int_{\mathbb{R}^{N}}\psi _{0}^{\varepsilon
}(x)f^{\varepsilon }(x+t)dx-\Psi ^{\varepsilon }(t)\text{ for a.e. }t\in 
\mathbb{R}^{N}.
\end{equation*}%
We claim that, for a.e. $t$, $V_{\varepsilon }(t)\rightarrow 0$ as $%
\varepsilon \rightarrow 0$ (possibly up to a subsequence). Indeed, due to
Proposition \ref{p2.4}, 
\begin{equation*}
\int_{\mathbb{R}^{N}}\psi _{0}^{\varepsilon }(x)f^{\varepsilon
}(x+t)dx\rightarrow \iint_{\mathbb{R}^{N}\times \Delta (A)}\widehat{\psi }%
_{0}(x-t,s-r)\widehat{f}(x,s)dxd\beta (s)\text{ as }\varepsilon \rightarrow 0
\end{equation*}%
where $r$ is such that $\delta _{t/\varepsilon }\rightarrow r$ in $\Delta
(A) $ for some subsequence of $\varepsilon $. Moreover, since $\Psi
^{\varepsilon }(t)=\Phi ^{\varepsilon }(t,\delta _{t/\varepsilon })$, by the
continuity of $\Phi (t,\cdot )$, we have for the same subsequence as $%
\varepsilon \rightarrow 0$ that 
\begin{equation*}
\Psi ^{\varepsilon }(t)\rightarrow \iint_{\mathbb{R}^{N}\times \Delta (A)}%
\widehat{\psi }_{0}(x-t,s-r)\widehat{f}(x,s)dxd\beta (s).
\end{equation*}%
The above claim is justified. Furthermore 
\begin{equation*}
\left\vert V_{\varepsilon }(t)\right\vert \leq c\text{ for a.e. }t\in Q
\end{equation*}%
where $c$ is a positive constant independent of $t$ and $\varepsilon $. On
the other hand, since $Q$ is bounded, we infer from the weak $\Sigma $%
-convergence of $(u_{\varepsilon })_{\varepsilon }$ that $u_{\varepsilon
}\rightarrow \int_{\Delta (A)}\widehat{u}_{0}(\cdot ,r)d\beta (r)$ in $%
L^{1}(Q)$-weak as $\varepsilon \rightarrow 0$. Therefore, by \cite[Lemma 3.4]%
{BBM} it follows that $(II_{1})\rightarrow 0$ as $\varepsilon \rightarrow 0$.

Regarding $(II_{2})$, using once again the weak $\Sigma $-convergence of $%
(u_{\varepsilon })_{\varepsilon }$, we get 
\begin{equation*}
\int_{Q}u_{\varepsilon }(t)\Psi ^{\varepsilon }(t)dt\rightarrow
\iint_{Q\times \Delta (A)}\widehat{u}_{0}(t,r)\widehat{\Psi }(t,r)dtd\beta ,
\end{equation*}%
and 
\begin{eqnarray*}
&&\iint_{Q\times \Delta (A)}\widehat{u}_{0}(t,r)\widehat{\Psi }(t,r)dtd\beta
\\
&=&\iint_{Q\times \Delta (A)}\widehat{u}_{0}(t,r)\Phi (t,r)dtd\beta \\
&=&\iint_{Q\times \Delta (A)}\left[ \iint_{\mathbb{R}^{N}\times \Delta (A)}%
\widehat{u}_{0}(t,r)\widehat{\psi }_{0}(x-t,s-r)dtd\beta (r)\right] \widehat{%
f}(x,s)dxd\beta (s) \\
&=&\iint_{Q\times \Delta (A)}(\widehat{u}_{0}\ast \ast \widehat{\psi }%
_{0})(x,s)\widehat{f}(x,s)dxd\beta (s).
\end{eqnarray*}%
Thus, there is $0<\alpha _{1}\leq \alpha $ such that 
\begin{equation}
\left\vert \int_{Q}(u_{\varepsilon }\ast \psi _{0}^{\varepsilon
})f^{\varepsilon }dx-\iint_{Q\times \Delta (A)}(\widehat{u}_{0}\ast \ast 
\widehat{\psi }_{0})\widehat{f}dxd\beta \right\vert \leq \frac{\eta }{2}%
\text{ for }0<\varepsilon \leq \alpha _{1}\text{.}  \label{2.8}
\end{equation}%
Now, let $0<\varepsilon \leq \alpha _{1}$ be fixed. From the following
decomposition 
\begin{eqnarray*}
&&\int_{Q}(u_{\varepsilon }\ast v_{\varepsilon })f^{\varepsilon
}dx-\iint_{Q\times \Delta (A)}(\widehat{u}_{0}\ast \ast \widehat{v}_{0})%
\widehat{f}dxd\beta \\
&=&\int_{Q}\left[ u_{\varepsilon }\ast (v_{\varepsilon }-\psi
_{0}^{\varepsilon })\right] f^{\varepsilon }dx+\iint_{Q\times \Delta (A)}%
\left[ \widehat{u}_{0}\ast \ast (\widehat{\psi }_{0}-\widehat{v}_{0})\right] 
\widehat{f}dxd\beta \\
&&+\int_{Q}(u_{\varepsilon }\ast \psi _{0}^{\varepsilon })f^{\varepsilon
}dx-\iint_{Q\times \Delta (A)}(\widehat{u}_{0}\ast \ast \widehat{\psi }_{0})%
\widehat{f}dxd\beta ,
\end{eqnarray*}%
we infer from (\ref{2.6})-(\ref{2.8}) that 
\begin{equation*}
\left\vert \int_{Q}(u_{\varepsilon }\ast v_{\varepsilon })f^{\varepsilon
}dx-\iint_{Q\times \Delta (A)}(\widehat{u}_{0}\ast \ast \widehat{v}_{0})%
\widehat{f}dxd\beta \right\vert \leq C\eta \text{ for }0<\varepsilon \leq
\alpha _{1}.
\end{equation*}%
Here $C$ is a positive constant independent of $\varepsilon $. This
concludes the proof.
\end{proof}

\begin{remark}
\label{r2.1}\emph{In this work, we will deal with sequences of functions }$%
(u_{\varepsilon })_{\varepsilon >0}$\emph{\ in the space }$L^{p}(Q_{T})$%
\emph{\ (where} $Q_{T}=Q\times (0,T)$\emph{), and we will say that such a
sequence is weakly }$\Sigma $\emph{-convergent in }$L^{p}(Q_{T})$\emph{\ if} 
\begin{equation*}
\int_{Q_{T}}u_{\varepsilon }\left( x,t\right) f\left( x,t,\frac{x}{%
\varepsilon }\right) dxdt\rightarrow \iint_{Q_{T}\times \Delta (A)}\widehat{u%
}_{0}\left( x,t,s\right) \widehat{f}\left( x,t,s\right) dxdtd\beta \left(
s\right)
\end{equation*}%
\emph{for all }$f\in L^{p^{\prime }}\left( Q_{T};A\right) $\emph{\ }$\left(
1/p^{\prime }=1-1/p\right) $\emph{\ where }$\widehat{f}\left( x,t,\cdot
\right) =\mathcal{G}(f\left( x,t,\cdot \right) )$\emph{\ a.e. in }$(x,t)\in
Q_{T}$\emph{. With the above definition, it is a very easy exercise to see
that all the previous results of the current subsection are carried over
mutatis mutandis to the present setting.}
\end{remark}

\section{Statement of the problem: existence result and a priori estimate}

We consider the parametrized Wilson-Cowan model \cite{WC73} 
\begin{equation}
\left\{ 
\begin{array}{l}
\frac{\partial u_{\varepsilon }}{\partial t}(x,t)=-u_{\varepsilon
}(x,t)+\int_{\mathbb{R}^{N}}J^{\varepsilon }(x-\xi )f\left( \frac{\xi }{%
\varepsilon },u_{\varepsilon }(\xi ,t)\right) d\xi ,\ x\in \mathbb{R}^{N},t>0
\\ 
u_{\varepsilon }(x,0)=u^{0}(x),\ x\in \mathbb{R}^{N}%
\end{array}%
\right.  \label{3.1}
\end{equation}%
where $u_{\varepsilon }$ denotes the electrical activity level field, $f$
the firing rate function and $J^{\varepsilon }=J^{\varepsilon
}(x)=J(x,x/\varepsilon )$ the connectivity kernel. We assume that $J\in 
\mathcal{K}(\mathbb{R}^{N};A)$ (where $A\in \mathbb{A}$) is nonnegative and
is such that $\int_{\mathbb{R}^{N}}J^{\varepsilon }(x)dx\leq 1$, $f:\mathbb{R%
}_{y}^{N}\times \mathbb{R}_{\mu }\rightarrow \mathbb{R}$ is a nonnegative
Carath\'{e}odory function constrained as follows:

\begin{itemize}
\item[(H1)] For almost all $y\in \mathbb{R}^{N}$, the function $f(y,\cdot
):\lambda \mapsto f(y,\lambda )$ is continuous; for all $\lambda \in \mathbb{%
R}$, the function $f(\cdot ,\lambda ):y\mapsto f(y,\lambda )$ is measurable
and $f(\cdot ,0)$ lies in $L^{1}(\mathbb{R}^{N})\cap L^{2}(\mathbb{R}^{N})$;
there exists a positive constant $k_{1}$ such that 
\begin{equation*}
\left\vert f(y,\mu _{1})-f(y,\mu _{2})\right\vert \leq k_{1}\left\vert \mu
_{1}-\mu _{2}\right\vert \text{ for all }y\in \mathbb{R}^{N}\text{ and all }%
\mu _{1},\mu _{2}\in \mathbb{R}.
\end{equation*}
\end{itemize}

An example of a function $f$ that satisfies hypothesis (H1) is $f(y,\lambda
)=g(y)h(\lambda )$ where $g\in \mathcal{K}(\mathbb{R}^{N})$, $g\geq 0$ and 
\begin{equation*}
h(\lambda )=\frac{1}{1+\exp (-\beta (\lambda -\theta ))}\text{ (}\lambda \in 
\mathbb{R}\text{) where }\beta >0\text{ and }\theta \text{ are given.}
\end{equation*}%
It follows from (H1) that, for any function $u\in L^{2}(\mathbb{R}^{N})$,
the function $x\mapsto f(x/\varepsilon ,u(x))$ denoted below by $%
f^{\varepsilon }(\cdot ,u)$, is well defined from $\mathbb{R}^{N}$ to $L^{2}(%
\mathbb{R}^{N})$. Moreover it is an easy exercise to see that 
\begin{equation}
\left\Vert f^{\varepsilon }(\cdot ,u)\right\Vert _{L^{2}(\mathbb{R}%
^{N})}\leq k_{1}\left\Vert u\right\Vert _{L^{2}(\mathbb{R}^{N})}+c_{1}\text{
for all }0<\varepsilon \leq 1  \label{3.0}
\end{equation}%
where $c_{1}=\left\Vert f(\cdot ,0)\right\Vert _{L^{2}(\mathbb{R}^{N})}$.
The following existence result holds true.

\begin{theorem}
\label{t3.1}Let $0<T<\infty $. Assume that $u^{0}\in L^{1}(\mathbb{R}%
^{N})\cap L^{2}(\mathbb{R}^{N})$. Then for each fixed $\varepsilon >0$,
there exists a unique solution $u_{\varepsilon }\in \mathcal{C}([0,\infty
);L^{1}(\mathbb{R}^{N})\cap L^{2}(\mathbb{R}^{N}))$ to \emph{(\ref{3.1})}
which satisfies the estimate 
\begin{equation}
\sup_{0\leq t\leq T}\left[ \left\Vert u_{\varepsilon }(\cdot ,t)\right\Vert
_{L^{1}(\mathbb{R}^{N})}+\left\Vert u_{\varepsilon }(\cdot ,t)\right\Vert
_{L^{2}(\mathbb{R}^{N})}\right] \leq C\text{ for all }\varepsilon >0
\label{3.2}
\end{equation}%
where $C$ is a positive constant depending only on $u^{0}$ and $T$.
\end{theorem}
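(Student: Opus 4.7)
The plan is to rewrite the Cauchy problem (\ref{3.1}) in mild (Duhamel) form and apply a Banach contraction argument, then to derive the a priori bound directly from the integral formulation by Gronwall. The mild formulation reads
\begin{equation*}
u_{\varepsilon}(x,t)=e^{-t}u^{0}(x)+\int_{0}^{t}e^{-(t-s)}\bigl[J^{\varepsilon}\ast f^{\varepsilon}(\cdot,u_{\varepsilon}(\cdot,s))\bigr](x)\,ds,
\end{equation*}
and I denote its right-hand side by $(\mathcal{T}u_{\varepsilon})(x,t)$. I would work on the Banach space $X_{T}=\mathcal{C}([0,T];L^{1}(\mathbb{R}^{N})\cap L^{2}(\mathbb{R}^{N}))$ endowed with the weighted norm $\|u\|_{X_{T}}=\sup_{0\leq t\leq T}e^{-\lambda t}(\|u(\cdot,t)\|_{L^{1}}+\|u(\cdot,t)\|_{L^{2}})$ for a parameter $\lambda>0$ to be fixed.

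That $\mathcal{T}:X_{T}\to X_{T}$ is well-defined rests on three elementary ingredients: Young's inequality $\|J^{\varepsilon}\ast g\|_{L^{p}}\leq\|J^{\varepsilon}\|_{L^{1}}\|g\|_{L^{p}}\leq\|g\|_{L^{p}}$ for $p\in\{1,2\}$ (using $\int J^{\varepsilon}\,dx\leq 1$); the pointwise bound $|f(y,\mu)|\leq|f(y,0)|+k_{1}|\mu|$ coming from (H1); and the scaling identity $\|f(\cdot/\varepsilon,0)\|_{L^{p}}^{p}=\varepsilon^{N}\|f(\cdot,0)\|_{L^{p}}^{p}\leq\|f(\cdot,0)\|_{L^{p}}^{p}$, valid for $\varepsilon\leq 1$ and $p=1,2$. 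Continuity of $t\mapsto(\mathcal{T}u)(\cdot,t)$ into $L^{1}\cap L^{2}$ then follows by dominated convergence. For the contraction property, (H1) and Young's inequality give
\begin{equation*}
\|\mathcal{T}u(\cdot,t)-\mathcal{T}v(\cdot,t)\|_{L^{p}}\leq k_{1}\int_{0}^{t}e^{-(t-s)}\|u(\cdot,s)-v(\cdot,s)\|_{L^{p}}\,ds,\qquad p\in\{1,2\}.
\end{equation*}
Summing over $p=1,2$, multiplying by $e^{-\lambda t}$ and carrying out the $s$-integral yields $\|\mathcal{T}u-\mathcal{T}v\|_{X_{T}}\leq\tfrac{k_{1}}{1+\lambda}\|u-v\|_{X_{T}}$; choosing $\lambda$ sufficiently large makes $\mathcal{T}$ a strict contraction on $X_{T}$ for \emph{every} $T>0$, so Banach's fixed-point theorem furnishes a unique global-in-time solution $u_{\varepsilon}\in\mathcal{C}([0,\infty);L^{1}\cap L^{2})$.

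For the a priori estimate (\ref{3.2}), I take absolute values in the mild formula and apply Young's inequality once more to obtain, for $p\in\{1,2\}$,
\begin{equation*}
\|u_{\varepsilon}(\cdot,t)\|_{L^{p}}\leq\|u^{0}\|_{L^{p}}+c_{p}+k_{1}\int_{0}^{t}\|u_{\varepsilon}(\cdot,s)\|_{L^{p}}\,ds,
\end{equation*}
where $c_{p}=\|f(\cdot,0)\|_{L^{p}}$ is finite and $\varepsilon$-independent by the scaling above. Gronwall's lemma then gives $\|u_{\varepsilon}(\cdot,t)\|_{L^{p}}\leq(\|u^{0}\|_{L^{p}}+c_{p})e^{k_{1}t}$, and adding the estimates for $p=1$ and $p=2$ yields (\ref{3.2}) with a constant depending only on $u^{0}$ and $T$. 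The delicate point throughout is uniformity in $\varepsilon$: every constant, both in the contraction step and in the Gronwall step, must be genuinely independent of $\varepsilon\in(0,1]$. This is exactly what $\|J^{\varepsilon}\|_{L^{1}}\leq 1$ and $\|f(\cdot/\varepsilon,0)\|_{L^{p}}\leq\|f(\cdot,0)\|_{L^{p}}$ guarantee; without this uniformity the existence statement would still hold but estimate (\ref{3.2}) --- the cornerstone of the homogenization in Section 4 --- would collapse.
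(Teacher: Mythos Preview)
Your proof is correct but follows a genuinely different route from the paper's. The paper works with the \emph{direct} integral reformulation $K_{\varepsilon}(\phi)=\phi(\cdot,0)+\int_{0}^{t}(J^{\varepsilon}\ast f^{\varepsilon}(\cdot,\phi)-\phi)\,d\tau$ on a short time interval $[0,\rho]$ with the unweighted sup-norm, obtains a contraction by choosing $\rho$ small ($2(k_{1}+1)\rho<1$), and then \emph{cites} an external reference (Potthast--Graben) for the extension to global time. You instead absorb the damping $-u_{\varepsilon}$ into the Duhamel factor $e^{-(t-s)}$ and use the exponential-weight trick on the full interval $[0,T]$, which delivers global existence in one stroke and is self-contained. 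For the estimate (\ref{3.2}) the paper multiplies the equation by $u_{\varepsilon}$, integrates, and handles the resulting differential inequality for $\|u_{\varepsilon}(\cdot,t)\|_{L^{2}}^{2}$ via a dichotomy (either $\|u_{\varepsilon}\|_{L^{2}}\leq 2c_{1}/k_{1}$ or else $\tfrac{d}{dt}\|u_{\varepsilon}\|_{L^{2}}^{2}\leq 3k_{1}\|u_{\varepsilon}\|_{L^{2}}^{2}$), treating $L^{1}$ separately; you instead bound $\|u_{\varepsilon}(\cdot,t)\|_{L^{p}}$ directly from the mild formula and close with a single Gronwall step, which treats $p=1$ and $p=2$ symmetrically. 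Both arguments rely on the same uniform-in-$\varepsilon$ inputs ($\|J^{\varepsilon}\|_{L^{1}}\leq 1$ and the scaling $\|f(\cdot/\varepsilon,0)\|_{L^{p}}\leq\|f(\cdot,0)\|_{L^{p}}$ for $\varepsilon\leq 1$), so the content is equivalent; your version is shorter, avoids the external citation, and makes the $\varepsilon$-uniformity more transparent.
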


\begin{proof}
Inspired by the proof of Theorem 2.1 in \cite{Rossi1} (see also \cite{19})
we define the space 
\begin{equation*}
X=\mathcal{C}([0,\rho ];L^{1}(\mathbb{R}^{N})\cap L^{2}(\mathbb{R}^{N}))
\end{equation*}%
with norm 
\begin{equation*}
\left\Vert u\right\Vert _{X}=\max_{t\in \lbrack 0,\rho ]}\left( \left\Vert
u(\cdot ,t)\right\Vert _{L^{1}(\mathbb{R}^{N})}+\left\Vert u(\cdot
,t)\right\Vert _{L^{2}(\mathbb{R}^{N})}\right) ,
\end{equation*}%
where $\rho >0$ is an arbitrary real number to be suitably chosen in the
sequel. Next, by the assumptions on $J$ we have that the trace $%
J^{\varepsilon }$ is well defined and is an element of $\mathcal{K}(\mathbb{R%
}^{N})$ for every $\varepsilon >0$. Let us now consider the operator $%
K_{\varepsilon }:X\rightarrow X$ defined by 
\begin{equation*}
K_{\varepsilon }(\phi )(x,t)=\phi (x,0)+\int_{0}^{t}(J^{\varepsilon }\ast
f^{\varepsilon }(\cdot ,\phi )-\phi )(x,\tau )d\tau .
\end{equation*}%
We observe that a fixed point to $K_{\varepsilon }$ is a local solution $%
u_{\varepsilon }\in X$ to (\ref{3.1}) by the Banach fixed point theorem. We
proceed by showing that $K_{\varepsilon }$ is a strict contraction on $X$.
By the properties of $J^{\varepsilon }$, $f$ and by the use of Young's
inequality, we obtain 
\begin{eqnarray*}
&&\left\Vert K_{\varepsilon }(u)-K_{\varepsilon }(v)\right\Vert _{L^{2}(%
\mathbb{R}^{N})}(t) \\
&\leq &\int_{0}^{\rho }[\left\Vert J^{\varepsilon }\ast (f^{\varepsilon
}(\cdot ,u)-f^{\varepsilon }(\cdot ,v)\right\Vert _{L^{2}(\mathbb{R}%
^{N})}(\tau )+\left\Vert u-v\right\Vert _{L^{2}(\mathbb{R}^{N})}(\tau )]d\tau
\\
&\leq &\int_{0}^{\rho }(k_{1}+1)\left\Vert u-v\right\Vert _{L^{2}(\mathbb{R}%
^{N})}(\tau )d\tau \\
&\leq &(k_{1}+1)\rho \left\Vert u-v\right\Vert _{X}\text{ for every }0\leq
t\leq \rho .
\end{eqnarray*}%
We also have 
\begin{equation*}
\left\Vert K_{\varepsilon }(u)-K_{\varepsilon }(v)\right\Vert _{L^{1}(%
\mathbb{R}^{N})}(t)\leq (k_{1}+1)\rho \left\Vert u-v\right\Vert _{X}\text{
for every }0\leq t\leq \rho .
\end{equation*}%
Choosing $\rho $ small enough such that $2(k_{1}+1)\rho <1$, we obtain that $%
K_{\varepsilon }$ is a strict contraction on $X$. Thus by the Banach fixed
point theorem, there exists a unique local solution $u_{\varepsilon }\in X$
to (\ref{3.1}). Next, arguing exactly as in the proof of Theorem 2.7 in \cite%
{PG}, we get the global existence of the solution to (\ref{3.1}). Now, we
need to check that estimate (\ref{3.2})\ holds true. In order to do that, by
multiplying Eq. (\ref{3.1}) by $u_{\varepsilon }(x,t)$ and integrate the
resulting equality over $\mathbb{R}^{N}$, we obtain 
\begin{equation*}
\frac{\partial }{\partial t}\int_{\mathbb{R}^{N}}\left\vert u_{\varepsilon
}(x,t)\right\vert ^{2}dx=-2\int_{\mathbb{R}^{N}}\left\vert u_{\varepsilon
}(x,t)\right\vert ^{2}dx+2\int_{\mathbb{R}^{N}}u_{\varepsilon
}(x,t)[J^{\varepsilon }\ast f^{\varepsilon }(\cdot ,u_{\varepsilon
})](x,t)dx.
\end{equation*}%
By H\"{o}lder and Young inequalities, we get 
\begin{eqnarray*}
\int_{\mathbb{R}^{N}}u_{\varepsilon }(x,t)[J^{\varepsilon }\ast
f^{\varepsilon }(\cdot ,u_{\varepsilon })](x,t)dx &\leq &\left\Vert
u_{\varepsilon }(\cdot ,t)\right\Vert _{L^{2}(\mathbb{R}^{N})}\left( \int_{%
\mathbb{R}^{N}}\left\vert J^{\varepsilon }\ast f^{\varepsilon }(\cdot
,u_{\varepsilon })\right\vert ^{2}dx\right) ^{\frac{1}{2}} \\
&\leq &\left\Vert u_{\varepsilon }(\cdot ,t)\right\Vert _{L^{2}(\mathbb{R}%
^{N})}\left\Vert J^{\varepsilon }\right\Vert _{L^{1}(\mathbb{R}%
^{N})}\left\Vert f^{\varepsilon }(\cdot ,u_{\varepsilon })\right\Vert
_{L^{2}(\mathbb{R}^{N})}.
\end{eqnarray*}%
We infer from (\ref{3.0}) that 
\begin{equation*}
\left\Vert f^{\varepsilon }(\cdot ,u_{\varepsilon })\right\Vert _{L^{2}(%
\mathbb{R}^{N})}\leq k_{1}\left\Vert u_{\varepsilon }(\cdot ,t)\right\Vert
_{L^{2}(\mathbb{R}^{N})}+c_{1}.
\end{equation*}%
Thus 
\begin{eqnarray*}
\frac{d}{dt}\left\Vert u_{\varepsilon }(\cdot ,t)\right\Vert _{L^{2}(\mathbb{%
R}^{N})}^{2} &\leq &-2\left\Vert u_{\varepsilon }(\cdot ,t)\right\Vert
_{L^{2}(\mathbb{R}^{N})}^{2}+2k_{1}\left\Vert u_{\varepsilon }(\cdot
,t)\right\Vert _{L^{2}(\mathbb{R}^{N})}^{2}+2c_{1}\left\Vert u_{\varepsilon
}(\cdot ,t)\right\Vert _{L^{2}(\mathbb{R}^{N})} \\
&\leq &2k_{1}\left\Vert u_{\varepsilon }(\cdot ,t)\right\Vert _{L^{2}(%
\mathbb{R}^{N})}^{2}+2c_{1}\left\Vert u_{\varepsilon }(\cdot ,t)\right\Vert
_{L^{2}(\mathbb{R}^{N})} \\
&=&2\left\Vert u_{\varepsilon }(\cdot ,t)\right\Vert _{L^{2}(\mathbb{R}%
^{N})}^{2}\left( k_{1}+\frac{c_{1}}{\left\Vert u_{\varepsilon }(\cdot
,t)\right\Vert _{L^{2}(\mathbb{R}^{N})}}\right) .
\end{eqnarray*}%
Either we have 
\begin{equation*}
\left\Vert u_{\varepsilon }(\cdot ,t)\right\Vert _{L^{2}(\mathbb{R}%
^{N})}\leq \frac{2c_{1}}{k_{1}}\text{ for all }\varepsilon >0
\end{equation*}%
or there are some $\varepsilon >0$ for which 
\begin{equation*}
\left\Vert u_{\varepsilon }(\cdot ,t)\right\Vert _{L^{2}(\mathbb{R}^{N})}>%
\frac{2c_{1}}{k_{1}}.
\end{equation*}%
Then, for such $\varepsilon $ we have 
\begin{equation*}
\frac{d}{dt}\left\Vert u_{\varepsilon }(\cdot ,t)\right\Vert _{L^{2}(\mathbb{%
R}^{N})}^{2}\leq 2\left\Vert u_{\varepsilon }(\cdot ,t)\right\Vert _{L^{2}(%
\mathbb{R}^{N})}^{2}\left( k_{1}+\frac{k_{1}}{2}\right) =3k_{1}\left\Vert
u_{\varepsilon }(\cdot ,t)\right\Vert _{L^{2}(\mathbb{R}^{N})}^{2},
\end{equation*}%
hence 
\begin{equation*}
\left\Vert u_{\varepsilon }(\cdot ,t)\right\Vert _{L^{2}(\mathbb{R}%
^{N})}^{2}\leq \exp (3k_{1}t)\left\Vert u^{0}\right\Vert _{L^{2}(\mathbb{R}%
^{N})}^{2}\leq \exp (3k_{1}T\left\Vert u^{0}\right\Vert _{L^{2}(\mathbb{R}%
^{N})}^{2}.
\end{equation*}%
In both cases we see that there exists a positive constant $C$ depending
only on both $u^{0}$ and $T$ such that 
\begin{equation*}
\left\Vert u_{\varepsilon }(\cdot ,t)\right\Vert _{L^{2}(\mathbb{R}%
^{N})}\leq C\text{ for all }0<\varepsilon \leq 1\text{ and all }0\leq t\leq T%
\text{.}
\end{equation*}%
It is also an easy task to see that 
\begin{equation*}
\left\Vert u_{\varepsilon }(\cdot ,t)\right\Vert _{L^{1}(\mathbb{R}%
^{N})}\leq C\text{ for all }\varepsilon >0\text{ and all }0\leq t\leq T\text{%
.}
\end{equation*}%
This completes the proof.
\end{proof}

\begin{remark}
\label{r3.1}\emph{From the uniform boundedness of }$(u_{\varepsilon
})_{0<\varepsilon \leq 1}$\emph{\ in }$\mathcal{C}([0,T];L^{1}(\mathbb{R}%
^{N})\cap L^{2}(\mathbb{R}^{N}))$\emph{, we deduce that }$(u_{\varepsilon
})_{0<\varepsilon \leq 1}$\emph{\ is uniformly integrable in }$L^{1}(\mathbb{%
R}^{N}\times (0,T))$\emph{. Indeed, let }$B\subset \mathbb{R}^{N}\times
(0,T) $\emph{\ be an integrable subset. Denoting by }$\left\vert
B\right\vert $\emph{\ its Lebesgue measure, we have by H\"{o}lder's
inequality that} 
\begin{eqnarray*}
\int_{B}\left\vert u_{\varepsilon }\right\vert dxdt &\leq &\left\vert
B\right\vert ^{\frac{1}{2}}\left\Vert u_{\varepsilon }\right\Vert _{L^{2}(%
\mathbb{R}^{N}\times (0,T))} \\
&\leq &C\left\vert B\right\vert ^{\frac{1}{2}},
\end{eqnarray*}%
\emph{hence }$\sup_{0<\varepsilon \leq 1}\int_{B}\left\vert u_{\varepsilon
}\right\vert dxdt\rightarrow 0$\emph{\ when }$\left\vert B\right\vert
\rightarrow 0$\emph{. We may therefore use [part (ii) of] Proposition \ref%
{p2.2} to deduce the existence of a subsequence of }$(u_{\varepsilon
})_{0<\varepsilon \leq 1}$\emph{\ that weakly }$\Sigma $\emph{-converges in }%
$L^{1}(\mathbb{R}^{N}\times (0,T))$\emph{.}
\end{remark}

\section{Homogenization result}

Let $A$ be an algebra with mean value taken in the class $\mathbb{A}$. In
order to perform the homogenization process, we assume that the function $f$
satisfies the following hypotheses (in which we set $\mathbb{R}_{T}^{N}=%
\mathbb{R}^{N}\times (0,T)$):

\begin{itemize}
\item[(H2)] $f(\cdot ,\mu )\in A$ for all $\mu \in \mathbb{R}$

\item[(H3)] For any sequence $(v_{\varepsilon })_{\varepsilon >0}\subset
L^{1}(\mathbb{R}_{T}^{N})$ such that $v_{\varepsilon }\rightarrow v_{0}$ in $%
L^{1}(\mathbb{R}_{T}^{N})$-weak $\Sigma $, we have $f^{\varepsilon }(\cdot
,v_{\varepsilon })\rightarrow f(\cdot ,v_{0})$ in $L^{1}(\mathbb{R}_{T}^{N})$%
-weak $\Sigma $.
\end{itemize}

The hypothesis (H3) is meaningful. Indeed the convergence result $%
v_{\varepsilon }\rightarrow v_{0}$ in $L^{1}(\mathbb{R}_{T}^{N})$-weak $%
\Sigma $ does not entail the convergence result $f^{\varepsilon }(\cdot
,v_{\varepsilon })\rightarrow f(\cdot ,v_{0})$ in $L^{1}(\mathbb{R}_{T}^{N})$%
-weak $\Sigma $ in general. However there are many situations in which
assumption (H3) is satisfied. Here below are some examples.

\begin{itemize}
\item[1)] Assume that $A=\mathcal{C}_{\text{per}}(Y)$ and that $f(\cdot ,\mu
)$ is $Y$-periodic, $f(y,\cdot )$ is convex. Assume further that $f$
satisfies:

\begin{itemize}
\item[(H4)] $\int_{\mathbb{R}_{T}^{N}}f\left( \frac{x}{\varepsilon }%
,v_{\varepsilon }(x,t)\right) dxdt\rightarrow \iint_{\mathbb{R}%
_{T}^{N}\times Y}f(y,v_{0}(x,t,y))dxdtdy$ as $\varepsilon \rightarrow 0$,
whenever $v_{\varepsilon }\rightarrow v_{0}$ in $L^{1}(\mathbb{R}_{T}^{N})$%
-weak $\Sigma $.
\end{itemize}

\noindent Then $f^{\varepsilon }(\cdot ,v_{\varepsilon })\rightarrow f(\cdot
,v_{0})$ in $L^{1}(\mathbb{R}_{T}^{N})$-weak $\Sigma $ as $\varepsilon
\rightarrow 0$; see \cite[Theorem 4.3]{Visintin1}.

\item[2)] Assume hypotheses (H1)-(H2) hold true. Let $(v_{\varepsilon
})_{\varepsilon >0}$ be a bounded sequence in $L^{1}(\mathbb{R}_{T}^{N})\cap
L^{2}(\mathbb{R}_{T}^{N})$ such that $v_{\varepsilon }\rightarrow v_{0}$ in $%
L^{2}(\mathbb{R}_{T}^{N})$-weak $\Sigma $. Then (possibly up to a
subsequence) we have $v_{\varepsilon }\rightarrow v_{0}$ in $L^{1}(\mathbb{R}%
_{T}^{N})$-weak $\Sigma $. In view of assumption (H1), the sequence $%
(f^{\varepsilon }(\cdot ,v_{\varepsilon }))_{\varepsilon >0}$ is bounded in $%
L^{1}(\mathbb{R}_{T}^{N})\cap L^{2}(\mathbb{R}_{T}^{N})$ so that, up to a
subsequence we have $f^{\varepsilon }(\cdot ,v_{\varepsilon })\rightarrow
z_{0}$ in $L^{2}(\mathbb{R}_{T}^{N})$-weak $\Sigma $ (where $z_{0}\in L^{2}(%
\mathbb{R}_{T}^{N};\mathcal{B}_{A}^{2})$). Now, if we further assume that 
\begin{equation*}
\underset{\varepsilon \rightarrow 0}{\lim \inf }\int_{\mathbb{R}%
_{T}^{N}}f^{\varepsilon }(\cdot ,v_{\varepsilon })v_{\varepsilon }dxdt\leq
\iint_{\mathbb{R}_{T}^{N}\times \Delta (A)}\widehat{z}_{0}\widehat{v}%
_{0}dxdtd\beta ,
\end{equation*}%
then $f^{\varepsilon }(\cdot ,v_{\varepsilon })\rightarrow f(\cdot ,v_{0})$
in $L^{1}(\mathbb{R}_{T}^{N})$-weak $\Sigma $; see \cite[Theorem 8]{Deterhom}%
.

\item[3)] In the special case when $f(y,\lambda )=g(y)h(\lambda )$ with $%
g\in A\cap \mathcal{K}(\mathbb{R}^{N})$, $g\geq 0$ and $h(\lambda )=\frac{1}{%
1+\exp (-\beta (\lambda -\theta ))}$, (H3) is still true. Indeed, as
stressed in Appendix A of \cite{CLSSW2011}, we have $h(v_{\varepsilon
})\rightarrow h(v_{0})$ in $L^{1}(\mathbb{R}_{T}^{N})$-weak $\Sigma $, so
that, since $g\in A$, $f^{\varepsilon }(\cdot ,v_{\varepsilon
})=g^{\varepsilon }h(v_{\varepsilon })\rightarrow gh(v_{0})=f(\cdot ,v_{0})$
in $L^{1}(\mathbb{R}_{T}^{N})$-weak $\Sigma $.
\end{itemize}

We can now state and prove the homogenization result.

\begin{theorem}
\label{t4.1}For any fixed $\varepsilon >0$, let $u_{\varepsilon }$ be the
unique solution of \emph{(\ref{3.1})}. Then as $\varepsilon \rightarrow 0$,
we have 
\begin{equation}
u_{\varepsilon }\rightarrow u_{0}\text{ in }L^{1}(\mathbb{R}_{T}^{N})\text{%
-weak }\Sigma  \label{4.1}
\end{equation}%
where $u_{0}\in \mathcal{C}([0,T];L^{1}(\mathbb{R}^{N};\mathcal{B}_{A}^{1}))$
is the unique solution to the following equation 
\begin{equation}
\left\{ 
\begin{array}{l}
\frac{\partial u_{0}}{\partial t}(x,t,y)=-u_{0}(x,t,y)+(J\ast \ast f(\cdot
,u_{0}))(x,t,y),\ (x,t)\in \mathbb{R}_{T}^{N},\ y\in \mathbb{R}^{N} \\ 
u_{0}(x,0,y)=u^{0}(x),\ x\in \mathbb{R}^{N},\ y\in \mathbb{R}^{N}.%
\end{array}%
\right.  \label{4.2}
\end{equation}
\end{theorem}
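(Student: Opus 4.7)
The plan is to extract a weakly $\Sigma$-convergent subsequence from the a priori bounded family $(u_\varepsilon)$, pass to the limit in each term of a weak formulation of (\ref{3.1}), and then invoke uniqueness of the limit equation to upgrade subsequential convergence to convergence of the whole family.

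By the a priori estimate (\ref{3.2}) and Remark \ref{r3.1}, the family $(u_\varepsilon)$ is uniformly integrable in $L^1(\mathbb{R}_T^N)$, so Proposition \ref{p2.2}(ii) together with the time-dependent extension of Remark \ref{r2.1} yields a subsequence (not relabelled) and some $u_0 \in L^1(\mathbb{R}_T^N; \mathcal{B}_A^1)$ with $u_\varepsilon \to u_0$ in $L^1(\mathbb{R}_T^N)$-weak $\Sigma$. Hypothesis (H3) then delivers $f^\varepsilon(\cdot, u_\varepsilon) \to f(\cdot, u_0)$ in $L^1(\mathbb{R}_T^N)$-weak $\Sigma$. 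Independently, since $J \in \mathcal{K}(\mathbb{R}^N; A)$, it is a standard fact that $J^\varepsilon \to J$ in $L^1(\mathbb{R}^N)$-strong $\Sigma$.

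To pass to the limit in the convolution, I exploit the compact $x$-support of $J$, say $K$: for any bounded open $Q \subset \mathbb{R}^N$ and $x \in Q$, $(J^\varepsilon \ast f^\varepsilon(\cdot, u_\varepsilon))(x,t) = (J^\varepsilon \ast [\chi_\Omega f^\varepsilon(\cdot, u_\varepsilon)])(x,t)$ with $\Omega := Q - K$ bounded. Then Theorem \ref{t2.2}, in its time-dependent form (Remark \ref{r2.1}), applied with the bounded-domain sequence $\chi_\Omega f^\varepsilon(\cdot, u_\varepsilon) \in L^1(\Omega \times (0,T))$ (weakly $\Sigma$-convergent to $\chi_\Omega f(\cdot, u_0)$) and the $L^1(\mathbb{R}^N)$-sequence $J^\varepsilon$ (strongly $\Sigma$-convergent to $J$) gives
\begin{equation*}
J^\varepsilon \ast f^\varepsilon(\cdot, u_\varepsilon) \to J \ast\ast f(\cdot, u_0) \text{ in } L^1(Q \times (0,T))\text{-weak } \Sigma
\end{equation*}
for every such $Q$.

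I then test (\ref{3.1}) against $\Phi^\varepsilon(x,t) := \Phi(x,t,x/\varepsilon)$ for $\Phi \in \mathcal{C}_c^\infty([0,T) \times \mathbb{R}^N; A)$ with $\Phi(\cdot, T, \cdot) = 0$, integrate by parts in $t$, and let $\varepsilon \to 0$. The three linear terms converge to their natural two-scale counterparts by the weak $\Sigma$-convergence of $u_\varepsilon$ and the $\varepsilon$-independence of $u^0$; the convolution term is handled by the previous step. The resulting identity is exactly the weak formulation of (\ref{4.2}). Existence and uniqueness of a solution $u_0 \in \mathcal{C}([0,T]; L^1(\mathbb{R}^N; \mathcal{B}_A^1))$ to (\ref{4.2}) then follow by replicating the Banach fixed-point argument of Theorem \ref{t3.1} in that space, using the Lipschitz bound from (H1) and the Young-type inequality for $\ast\ast$ stated just before Theorem \ref{t2.2}. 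Uniqueness forces the whole family $(u_\varepsilon)$ to converge to $u_0$. The main obstacle is the convolution step: Theorem \ref{t2.2} is stated for a bounded-domain first sequence and a strongly $\Sigma$-convergent second sequence on $\mathbb{R}^N$, so one must place the weakly $\Sigma$-convergent $f^\varepsilon(\cdot, u_\varepsilon)$ on the bounded side (via the localization by the compact support of $J$) and $J^\varepsilon$ on the $\mathbb{R}^N$ side, and separately verify that the limit $J \ast\ast f(\cdot, u_0)$ is unaffected by the $\chi_\Omega$-truncation.
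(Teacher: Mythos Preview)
Your proof is correct and follows the same architecture as the paper's: extract a weakly $\Sigma$-convergent subsequence via uniform integrability, use (H3) for the nonlinearity, pass to the limit in the convolution via Theorem~\ref{t2.2}, and conclude by uniqueness of (\ref{4.2}).

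Two differences are worth noting. First, the paper simply invokes Theorem~\ref{t2.2} on $\mathbb{R}_T^N$ without comment, whereas you correctly observe that Theorem~\ref{t2.2} is stated for a \emph{bounded} domain on the weakly convergent side and insert the localization via the compact $x$-support $K$ of $J$; this is the right way to make the application rigorous (just take $\Omega$ large enough to contain $Q\cup(Q-K)$ so that the conclusion of Theorem~\ref{t2.2} lives on a set containing $Q$, and observe that the $\chi_\Omega$ truncation disappears in the limit since $\widehat{J}(x-t,\cdot)=0$ for $x\in Q$, $t\notin\Omega$). Second, the paper passes to the limit in the \emph{integral} form $u_\varepsilon(x,t)=u^0(x)+\int_0^t[(J^\varepsilon\ast f^\varepsilon(\cdot,u_\varepsilon))-u_\varepsilon](x,\tau)\,d\tau$ directly, while you test the differential equation against $\Phi^\varepsilon$ and integrate by parts in $t$. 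Both routes are equivalent; the paper's integral-equation route is marginally shorter since it avoids introducing test functions and immediately yields the continuity in $t$, whereas your weak-formulation route makes the role of $\Sigma$-convergence more transparent term by term.
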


\begin{proof}
We infer from Remark \ref{r3.1} that the sequence $(u_{\varepsilon
})_{\varepsilon >0}$ is uniformly integrable in $L^{1}(\mathbb{R}_{T}^{N})$.
So, given an ordinary sequence $E$ it follows from [part (ii) of]
Proposition \ref{p2.2} that there exist a subsequence $E^{\prime }$ of $E$
and a function $u_{0}\in L^{1}(\mathbb{R}_{T}^{N};\mathcal{B}_{A}^{1})$ such
that, as $E^{\prime }\ni \varepsilon \rightarrow 0$, we have (\ref{4.1}). It
now remains to check that $u_{0}$ solves Eq. (\ref{4.2}). Indeed it can be
easily shown that in view of the properties of $f$ and $J$, the solution to (%
\ref{4.2}) is unique, so that, by the uniqueness property we have the
convergence result (\ref{4.1}) for any ordinary sequence $E$ (not only up to
a subsequence $E^{\prime }$), and hence for the whole sequence $\varepsilon
\rightarrow 0$.

Now, since $J\in \mathcal{K}(\mathbb{R}^{N};A)\subset \mathcal{K}(\mathbb{R}%
^{N}\times (0,T);A)$, we have that 
\begin{equation*}
J^{\varepsilon }\rightarrow J\text{ in }L^{1}(\mathbb{R}^{N}\times (0,T))%
\text{-strong }\Sigma \text{ as }E^{\prime }\ni \varepsilon \rightarrow 0%
\text{; see e.g. \cite{26}.}
\end{equation*}%
Hypothesis (H3) together with the convergence result (\ref{4.1}) lead us to 
\begin{equation*}
f^{\varepsilon }(\cdot ,u_{\varepsilon })\rightarrow f(\cdot ,u_{0})\text{
in }L^{1}(\mathbb{R}_{T}^{N})\text{-weak }\Sigma \text{ as }E^{\prime }\ni
\varepsilon \rightarrow 0.
\end{equation*}%
It therefore follows from Theorem \ref{t2.2} and Remark \ref{r2.1} that 
\begin{equation*}
J^{\varepsilon }\ast f^{\varepsilon }(\cdot ,u_{\varepsilon })\rightarrow
J\ast \ast f(\cdot ,u_{0})\text{ in }L^{1}(\mathbb{R}_{T}^{N})\text{-weak }%
\Sigma \text{ as }E^{\prime }\ni \varepsilon \rightarrow 0.
\end{equation*}%
Next, Eq. (\ref{3.1}) is equivalent to the following integral equation 
\begin{equation*}
u_{\varepsilon }(x,t)=u^{0}(x)+\int_{0}^{t}\left[ (J^{\varepsilon }\ast
f^{\varepsilon }(\cdot ,u_{\varepsilon }))(x,\tau )-u_{\varepsilon }(x,\tau )%
\right] d\tau .
\end{equation*}%
Hence, letting $E^{\prime }\ni \varepsilon \rightarrow 0$ and using Fubini
and Lebesgue dominated convergence results in the integral term, we end up
with 
\begin{equation*}
u_{0}(x,t,y)=u^{0}(x)+\int_{0}^{t}\left[ (J\ast \ast f(\cdot ,u_{0}))(x,\tau
,y)-u_{0}(x,\tau ,y)\right] d\tau
\end{equation*}%
which is equivalent to (\ref{4.2}). Moreover this shows that $u_{0}$ lies in 
$\mathcal{C}([0,T];L^{1}(\mathbb{R}^{N};\mathcal{B}_{A}^{1}))$ as expected.
This concludes the proof.
\end{proof}

\section{Conclusions and outlook}

In this paper we have proved some important results which are relevant to
the theory of homogenization in connection with convolution sequences (see
e.g. Theorem \ref{t2.2}). It is to be noted that Theorem \ref{t2.2}
generalizes to the case of algebras with mean value, its counterpart proved
by Visintin \cite{Visintin2} in the special context of the algebras of
continuous periodic functions. This result, based on the so-called sigma
convergence concept, has allowed us to efficiently upscale a heterogeneous
Wilson-Cowan type of models for neural fields. The homogenization result
derived is accurate and more likely can not be achieved through other
classical and conventional methods such as the asymptotic expansions or the
time averaging. Another aspect that should be emphasized is that Theorem \ref%
{t2.2} widely opens the door to many other applications in applied science.
In this regard, it may allow to study homogenization problems in connection
with partial differential equations with fractional order derivatives. Such
kinds of homogenization problems have not yet been solved so far.

\begin{acknowledgement}
\emph{The authors would like to thank the anonymous referee for valuable
remarks and suggestions that helped them to significantly improve they work.}
\end{acknowledgement}

\end{document}